\documentclass[final,3p,times]{elsarticle}

\usepackage[T1]{fontenc}
\usepackage[utf8]{inputenc}
\usepackage{lmodern}
\usepackage{microtype}
\usepackage{amsmath,amssymb,mathtools,amsthm}
\usepackage{enumitem}
\usepackage{xcolor}
\usepackage[colorlinks=true,linkcolor=blue!45!black,citecolor=blue!45!black,urlcolor=blue!45!black]{hyperref}

\biboptions{sort&compress}

\newtheorem{theorem}{Theorem}[section]
\newtheorem{proposition}[theorem]{Proposition}
\newtheorem{lemma}[theorem]{Lemma}
\newtheorem{corollary}[theorem]{Corollary}
\theoremstyle{definition}

\newtheorem{remark}[theorem]{Remark}

\newcommand{\id}{\operatorname{id}}
\newcommand{\W}{\mathcal W}
\newcommand{\Vone}{\mathcal V_1}
\newcommand{\Vsub}{\mathcal V_{\leq 1}}
\newcommand{\Vext}{\mathcal V}
\newcommand{\Fork}{\Lambda}
\newcommand{\Dia}{\Diamond}
\newcommand{\Min}{\operatorname{Min}}
\newcommand{\minF}{\mathrm{min}_F}
\newcommand{\doi}[1]{\href{https://doi.org/#1}{doi:#1}}
\newcommand{\retpair}[4]{#1\,\xleftrightarrow[#3]{#2}\,#4}
\newcommand{\wayup}[1]{\mathord{\Uparrow} #1}

\journal{Preprint}

\begin{document}

\begin{frontmatter}

\title{A note on probabilistic powerdomains, RB-domains, and bc-domains}

\author[addr1]{Yuxu Chen}
\address[addr1]{School of Mathematics, Sichuan University}
\ead{chenyuxu@scu.edu.cn}

\begin{abstract}
For a finite nonempty poset \(F\), the normalized probabilistic powerdomain \(\Vone(F)\) is an RB-domain exactly when \(F\) is a finite rooted tree.  
We extend this classification to arbitrary nonempty dcpos from the viewpoint of forbidden structure.  The principal-ideal chain condition is expressed by the absence of a lower fork, i.e. a triple \((x,y,t)\) with \(x\leq t\), \(y\leq t\), and \(x\parallel y\).  
A useful point is that any dcpo $P$ without lower forks is continuous.  For normalized valuations the least element remains necessary, and we prove
\[
\begin{aligned}
\Vone(P)\text{ is RB}
\Longleftrightarrow
\Vone(P)\text{ is a pointed bc-domain}
\Longleftrightarrow
P\text{ has a least element and contains no lower fork}.
\end{aligned}
\]
For subprobability and extended valuations, the analogous classifications hold without the pointedness assumption on \(P\).  
\end{abstract}

\begin{keyword}
probabilistic powerdomain \sep RB-domain \sep bc-domain \sep Scott-continuous retract \sep lower fork
\MSC[2020] 06B35 \sep 06B30 \sep 60B05 \sep 68Q55
\end{keyword}

\end{frontmatter}

\section{Introduction}

The probabilistic powerdomain is a central domain-theoretic construction for modelling probabilistic computation, going back to Jones and Plotkin \cite{JonesPlotkin1989}.  A long-standing structural question in domain theory asks whether probabilistic powerdomains preserve finite-approximation properties of domains, in particular whether they preserve the class of RB-domains.

For finite posets this question was recently settled in \cite{ChenKouLyuFinite}.  For every finite nonempty poset \(F\),
\[
        \Vone(F)\text{ is an RB-domain}
        \quad\Longleftrightarrow\quad
        F\text{ is a rooted tree}.
\]
Equivalently, \(F\) has a least element and every principal ideal is a chain.  The proof is finite-dimensional: it uses minimal elements, Hasse graphs, and local stochastic cones.  These tools do not directly apply to arbitrary dcpos, which need not have useful Hasse graphs, finite sets of minimal points, or finite-dimensional local order cones.  
In the present paper, the only result imported from the finite classification is the four-point diamond obstruction stated in Proposition~\ref{prop:finite-diamond}; all general-dcpo arguments below are independent of the finite-dimensional proof.

This paper takes the finite classification as the starting point and proves the corresponding result for arbitrary nonempty dcpos.  We call a triple \((x,y,t)\) a \emph{lower fork} if
\[
        x\leq t,
        \qquad
        y\leq t,
        \qquad
        x\parallel y,
\]
where \(x\parallel y\) means \(x\nleq y\) and \(y\nleq x\).  Thus a poset has no lower fork exactly when every principal ideal \(\downarrow t\) is a chain. 

The main result is the following.

\begin{theorem}\label{thm:intro-main}
Let \(P\) be a nonempty dcpo.  Then:
\[
\begin{aligned}
\Vone(P)\text{ is RB}
&\Longleftrightarrow
\Vone(P)\text{ is a pointed bc-domain}
\Longleftrightarrow
P\text{ has a least element and no lower fork},\\
\Vsub(P)\text{ is RB}
&\Longleftrightarrow
\Vsub(P)\text{ is a pointed bc-domain}
\Longleftrightarrow
P\text{ has no lower fork},\\
\Vext(P)\text{ is RB}
&\Longleftrightarrow
\Vext(P)\text{ is a pointed bc-domain}
\Longleftrightarrow
P\text{ has no lower fork}.
\end{aligned}
\]
\end{theorem}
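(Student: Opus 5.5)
Each of the three lines will be proved as a cycle: (c) the order condition on $P$ implies (b) the powerdomain is a pointed bc-domain; (b) implies (a) RB; and (a) implies (c). The step (b)$\Rightarrow$(a) is standard, since every bc-domain is an RB-domain. Concretely, a bc-domain is a retract of an algebraic bc-domain (Scott continuous retraction), and algebraic bc-domains are bifinite. So all the work is in (c)$\Rightarrow$(b) and (a)$\Rightarrow$(c).

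For (c)$\Rightarrow$(b), first show that a dcpo $P$ with no lower fork is continuous. Every principal ideal $\downarrow t$ is a chain, so for $y\le t$ we can test $x\ll t$ against directed sets. If $D$ is directed with $\sup D\ge t$, then $D\cup\downarrow t$ lies inside $\downarrow\sup D$, which is a chain. Hence $x\ll t$ reduces to comparing $x$ with $\sup$'s of chains, and $t=\sup\{x: x\ll t\}$ should follow by a Zorn-type or chain-completeness argument inside the chain $\downarrow t$: a complete chain is continuous, and approximation inside $\downarrow t$ agrees with approximation in $P$ because any directed set with supremum above $t$ has its elements comparable with $t$.

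Then $P$ is a continuous dcpo whose down-sets are chains, i.e.\ a "forest" domain. Its lower-set structure lets us identify valuations with monotone data along branches. For the extended case, $\Vext(P)$ is a continuous dcpo by Jones. To get bounded completeness, I would show that two valuations $\mu,\nu$ bounded above have a least upper bound given by $\sup$ on Scott-open sets of the form $\uparrow$-generated by a point, extended modularly. The key computation is that, in a tree, the Scott-open sets are unions of disjoint "subtrees" $\uparrow x$ and their limits. Modularity then reduces to additivity over disjoint unions, so the pointwise-sup construction on basic opens remains a valuation.

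I expect this step to be the main obstacle. It requires a splitting lemma showing $\mu\le\nu$ iff $\mu(\uparrow\!\!\uparrow x)\le\nu(\uparrow\!\!\uparrow x)$ on basic opens. It also requires showing that the pointwise supremum over the filter of upper bounds is realized by a valuation. I would first prove it for simple valuations on finite subtrees, using the finite rooted tree classification of \cite{ChenKouLyuFinite} as a template. I would then pass to general $P$ by writing $\Vext(P)$ as a directed limit of $\Vext(F)$ for finite subtrees $F$ of a basis.

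For $\Vsub$ and $\Vone$, the subsets $\{\mu:\mu(P)\le1\}$ and $\{\mu:\mu(P)=1\}$ are Scott closed and upward-closed in the relevant sense. Existing sups of bounded pairs stay inside them, so bounded completeness is inherited. Pointedness is immediate for $\Vsub$ and $\Vext$, since the zero valuation is least. For $\Vone$ it holds because $\delta_\bot$ is least when $P$ has a least element $\bot$.

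For (a)$\Rightarrow$(c), argue by forbidden structure. If $(x,y,t)$ is a lower fork, the subposet $\{x,y,t\}$ together with a suitable bottom gives a diamond. I would build a Scott-continuous retraction from $\Vext(P)$ (resp.\ $\Vsub$, $\Vone$) onto $\V$ of the four-point diamond, for instance by pushing forward along a Scott-continuous map $P\to$ diamond determined by the open sets $\uparrow x$, $\uparrow y$, $\uparrow t$, paired with a section via simple valuations. RB-domains are closed under retracts, so Proposition~\ref{prop:finite-diamond} gives a contradiction; some care is needed when $x,y$ are not compact, and there one uses Scott-open substitutes $\uparrow\!\!\uparrow x'$ with $x'\ll x$ via continuity. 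Finally, for $\Vone$ the least element is needed. If $P$ has no least element, it has two incomparable minimal-type points $a,b$ with no common lower bound. Then $\Vone(P)$ has a retract $\Vone(\{a,b\})$, which is the unit interval with the discrete-order-induced structure, i.e.\ an antichain-like poset of two maximal mixtures with no bottom. That poset is not pointed and hence not RB in the paper's sense.
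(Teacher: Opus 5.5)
Your cycle (c)$\Rightarrow$(b)$\Rightarrow$(a)$\Rightarrow$(c) is the paper's, and your ``splitting lemma'' on the opens $\wayup{b}$ follows from Lemma~\ref{lem:basic-open-reconstruction}. However, the obstruction direction and the key construction of the positive direction both have genuine gaps.

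\textbf{Missing root.} It is false that a dcpo without a least element has two points with no common lower bound. The chain $\{\dots<-2<-1<0\}$ is a dcpo with no least element and no lower fork, and $\Vone$ of it is not RB. ``Not pointed, hence not RB'' also fails under the paper's nonpointed convention: a finite antichain is RB. And $\Vone(\{a,b\})$ need not be a retract: for $P=\Fork$, $\delta_1$ is the top of $\Vone(\Fork)$, so every monotone map into an antichain is constant. The missing idea is that the image of a finite-image deflation is a finite coinitial set. The convex structure of $\Vone(P)$ (mixtures $(1-t)\nu+t\eta$) forces such a set to produce a least element (Lemma~\ref{lem:finite-coinitial-vone}). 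A least element $\lambda$ of $\Vone(P)$ then forces one in $P$: $\lambda$ vanishes on proper opens, modularity excludes pairs without a common lower bound, and otherwise the proper opens $P\setminus\downarrow x$ form a directed cover of $P$.

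\textbf{Lower fork.} For $\Vsub$ and $\Vext$, $P$ need not be pointed and $x,y$ need not have a common lower bound (e.g.\ $P=\Fork$). So there is no ``suitable bottom'' and no diamond section. The paper instead retracts onto $\Vsub(\Fork)$ and $\Vext(\Fork)$ by a map $R$ that is not a pushforward; it discards the mass outside $U\cup V$. It then uses $\Vsub(\Fork)\cong\Vone(\Dia)$, and for $\Vext$ the principal ideal $\downarrow\delta_1\cong\Vsub(\Fork)$ together with Lemma~\ref{lem:principal-ideal-rb}. Moreover, in this direction $P$ is an arbitrary dcpo, so substitutes $\wayup{x'}$ obtained ``via continuity'' are not available. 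The opens $U=P\setminus\downarrow y$, $V=P\setminus\downarrow x$ of Lemma~\ref{lem:fork-separation} work in every dcpo.

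\textbf{Positive direction.} The construction you state (pointwise supremum on basic opens, extended additively over disjoint unions) does not yield the least upper bound. Take the rooted tree $r<s<c_1,c_2$ with $c_1\parallel c_2$, and let $\mu_i=\tfrac12\delta_r+\tfrac12\delta_{c_i}$. Then $p(s)=\sup_i\mu_i(\wayup{s})=\tfrac12$. But every upper bound gives $\wayup{s}$ mass at least $1$, in particular the least one, $\tfrac12\delta_{c_1}+\tfrac12\delta_{c_2}$. Because $p(c_1)+p(c_2)$ can exceed $p(s)$, the values $p(b)$ are not additive under refinement. The right object is the antichain envelope $J_p(U)=\sup\{\sum_{b\in A}p(b):A\subseteq U\text{ a finite antichain}\}$.

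Proving that $J_p$ is modular is the real content of this step. The paper does it by induction on finite forests, splitting at a root into pairwise incomparable branches so that $\Phi_F(F)=\max\bigl(h(r),\sum_i\Phi_{F_i}(F_i)\bigr)$. It then takes the directed supremum of the valuations $U\mapsto\Phi_F(F\cap U)$ over finite $F\subseteq P$. Minimality below any extended upper bound $\tau$ uses that the opens $\wayup{b}$, for $b$ in an antichain, are disjoint. Your alternative of presenting $\Vext(P)$ as a directed limit of the $\Vext(F)$ would require embedding--projection maps between $\Vext(P)$ and $\Vext(F)$, which you do not construct.

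\textbf{Smaller points.} You need continuity of $\Vsub(P)$, and of $\Vone(P)$ for pointed $P$, not only of $\Vext(P)$. Also, ``every bc-domain is RB'' requires pointedness (an infinite antichain is a bc-domain but not RB). This is harmless here, since (b) assumes pointedness.
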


The proof keeps the same main line as the finite classification, while using the finite paper only through the diamond obstruction stated in Proposition~\ref{prop:finite-diamond}.  
First, we prove the obstruction direction: if the relevant powerdomain is RB, then the underlying dcpo must be a tree order, and in the normalized case it must also have a least element.  
A missing root gives no finite-image deflation on \(\Vone(P)\).  A lower fork gives a finite forbidden retract: a diamond for \(\Vone\), and a three-point fork for \(\Vsub\) and \(\Vext\).

Second, we prove the positive direction in the stronger form of bounded completeness.  If \(P\) has no lower fork, then \(P\) is automatically continuous, and bounded families of valuations have explicit least upper bounds, computed from their masses on the basic Scott-open sets \(\wayup{b}\).  
Hence the relevant probabilistic powerdomains are pointed bc-domains.  By Lemma~\ref{lem:pointed-bc-rb}, every pointed bc-domain is an RB-domain, so the positive RB direction follows from this stronger statement.

\section{Preliminaries and finite input}\label{sec:preliminaries}

\subsection{Continuous dcpos, lower forks, and finite shapes}
We use the standard domain-theoretic terminology of dcpos, continuous dcpos, Scott-continuous maps, and Scott-open sets; see~\cite{AbramskyJung1994,GoubaultLarrecq2012}.  
A directed set is assumed to be nonempty.  A \emph{dcpo} is a poset in which every directed subset has a supremum.  A map between dcpos is \emph{Scott-continuous} if it is monotone and preserves directed suprema.  Equivalently, inverse images of Scott-open sets are Scott-open.  We write \(\sigma(P)\) for the Scott topology of \(P\).

For elements \(u,x\) of a dcpo, \(u\ll x\) means that \(u\) is way below \(x\): whenever \(D\) is directed and \(x\leq\sup D\), some \(d\in D\) satisfies \(u\leq d\).  A dcpo \(P\) is \emph{continuous} if, for every \(x\in P\), the set \(\{u:u\ll x\}\) is directed and has supremum \(x\).  In a continuous dcpo, the sets
\[
        \wayup{u}=\{x\in P:u\ll x\}
\]
form a basis of Scott-open sets.  More precisely, if \(U\in\sigma(P)\) and \(x\in U\), then there exists \(u\in U\) such that \(u\ll x\), and then \(x\in\wayup{u}\subseteq U\).

A dcpo is \emph{pointed} if it has a least element, denoted \(\bot\).  For elements \(x,y\in P\), we write \(x\parallel y\) if \(x\nleq y\) and \(y\nleq x\).  A \emph{lower fork} in a poset \(P\) is a triple \((x,y,t)\) such that \(x\leq t\), \(y\leq t\), and \(x\parallel y\).  Equivalently, \(P\) has a lower fork exactly when some principal ideal of the form \(\downarrow t\) is not a chain.  A poset with no lower fork will also be called a \emph{tree order}.

\begin{lemma}\label{lem:no-fork-continuous}
Every dcpo with no lower fork is continuous.  More precisely, if \(u<x\) in such a dcpo, then \(u\ll x\).
\end{lemma}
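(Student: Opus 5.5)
The plan is to prove the "more precisely" clause first and then read off continuity from it.

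\emph{Strict below implies way below.} Fix \(u<x\) and a directed set \(D\) with \(x\leq\sup D\); we must find \(d\in D\) with \(u\leq d\). Put \(s=\sup D\). Since \(u\leq x\leq s\) and every \(d\in D\) satisfies \(d\leq s\), all of \(u\) and the elements of \(D\) lie in the principal ideal \(\downarrow s\). Because \(P\) has no lower fork, \(\downarrow s\) is a chain, so each \(d\in D\) is comparable with \(u\). If some \(d\) satisfies \(u\leq d\), we are done. Otherwise \(d<u\) for every \(d\in D\). Then \(u\) is an upper bound of \(D\), so \(s\leq u\). Combined with \(u<x\leq s\), this gives \(s<s\), a contradiction. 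Hence \(u\ll x\).

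\emph{Continuity.} Fix \(x\) and let \(A=\{u:u\ll x\}\). Every element of \(A\) lies below \(x\), and by the first step every strict lower bound of \(x\) belongs to \(A\). So \(\{u:u<x\}\subseteq A\subseteq\downarrow x\). Since \(\downarrow x\) is a chain, \(A\) is a chain, and it is therefore directed as soon as it is nonempty. We then split into two cases.

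\begin{enumerate}[label=(\roman*)]
\item If \(x\) is compact, that is \(x\ll x\), then \(x\in A\). So \(A\) is nonempty and \(\sup A=x\).
\item If \(x\) is not compact, then \(A=\{u:u<x\}\). We need this set to be nonempty with supremum \(x\).
\end{enumerate}

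The main obstacle is case (ii). When \(x\) is not compact, there is a directed \(D\) with \(x\leq\sup D\) but no \(d\in D\) lying above \(x\). As in the first step, every element of \(D\) is comparable with \(x\) inside the chain \(\downarrow\sup D\), so every \(d\in D\) satisfies \(d<x\). Then \(\sup D\leq x\), and hence \(\sup D=x\). Now \(D\subseteq\{u:u<x\}\), so this set is nonempty. Moreover \(x=\sup D\leq\sup\{u:u<x\}\leq x\). Here \(\sup\{u:u<x\}\) exists because \(\{u:u<x\}\) is a nonempty chain, hence directed, in a dcpo. This gives \(\sup A=x\), so \(P\) is continuous.
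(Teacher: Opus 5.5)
Your proof is correct and follows the paper's argument. You prove \(u<x\Rightarrow u\ll x\) with the same chain-in-\(\downarrow\sup D\) argument, and your split on whether \(x\ll x\) is the contrapositive form of the paper's split on whether some directed set of strict predecessors of \(x\) has supremum \(x\).
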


\begin{proof}
Let \(P\) be a dcpo with no lower fork.  First suppose \(u<x\).  Let \(D\) be directed and let \(x\leq s=\sup D\).  Since the principal ideal \(\downarrow s\) is a chain, every element of \(D\) is comparable with \(u\).  If no \(d\in D\) were above \(u\), then every \(d\in D\) would be below \(u\), so \(u\) would be an upper bound of \(D\).  Hence \(s\leq u<x\), contradicting \(x\leq s\).  Thus some \(d\in D\) satisfies \(u\leq d\), and \(u\ll x\).

Fix \(x\in P\).  If there is a directed subset \(D_0\subseteq\{u:u<x\}\) with supremum \(x\), then the first paragraph shows that every element of \(D_0\) is way below \(x\), so \(x\) is the supremum of way-below elements.  If there is no such directed subset, then \(x\ll x\): indeed, if \(D\) is directed, \(x\leq\sup D\), and no element of \(D\) is above \(x\), then all elements of \(D\) are strict predecessors of \(x\), by the chain property of \(\downarrow\sup D\); moreover \(\sup D=x\), a contradiction.  Therefore \(x\) is again the supremum of way-below elements.  Finally, all way-below elements below \(x\) lie in the chain \(\downarrow x\), so they form a directed set.
\end{proof}

We use two finite posets.  The three-point fork is
\[
        \Fork=\{a,b,1:a<1,
        \ b<1,
        \ a\parallel b\},
\]
and the four-point diamond lattice is
\[
        \Dia=\{0,a,b,1:0<a<1,
        \ 0<b<1,
        \ a\parallel b\}.
\]
Thus \(\Dia\) is obtained from \(\Fork\) by adjoining a fresh least element.

\subsection{RB-domains, bc-domains, and retract arrows}

A \emph{deflation} on a dcpo \(D\) is a Scott-continuous map \(r:D\to D\) such that
\[
        r\leq\id_D
        \quad\text{and}\quad
        r(D)\text{ is finite}.
\]
No idempotence is assumed in this convention.  We use the nonpointed deflation convention for RB-domains: an \emph{RB-domain} is a dcpo \(D\) equipped with a directed family \((r_i)_{i\in I}\) of deflations such that
\[
        \sup_i r_i(x)=x
        \qquad(x\in D).
\]

A \emph{bc-domain} means a continuous dcpo in which every nonempty bounded subset has a supremum.  A \emph{pointed bc-domain} is a bc-domain with a least element.  We shall use the following standard implication in the positive direction~\cite
{GoubaultLarrecq2012}.

\begin{lemma}\label{lem:pointed-bc-rb}
Every pointed bc-domain is an RB-domain.
\end{lemma}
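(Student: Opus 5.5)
We prove Lemma~\ref{lem:pointed-bc-rb} by building, for a pointed bc-domain \(D\) with least element \(\bot\), a directed family of deflations indexed by the finite subsets of a basis.

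\textbf{Basis.} Let \(B\) be a basis of \(D\); for instance \(B=D\) works, since \(D\) is continuous.

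\textbf{Step maps.} For a finite set \(A\subseteq B\), define
\[
        r_A(x)=\sup\{a\in A: a\ll x\}.
\]
The set \(\{a\in A:a\ll x\}\) is bounded by \(x\), so it has a supremum by bounded completeness. When the set is empty, the supremum is \(\bot\); this is the one place pointedness is used.

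\textbf{Basic properties.}
\begin{itemize}[leftmargin=*]
\item Each \(r_A\) is monotone.
\item Each \(r_A\) satisfies \(r_A(x)\leq x\).
\item The image of \(r_A\) is finite. Every value is the supremum of some subset of \(A\), and \(A\) has only finitely many subsets, each with at most one supremum.
\end{itemize}

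\textbf{Scott-continuity.} This is the step needing care. Let \(x=\sup^{\uparrow}E\) with \(E\) directed, and let \(a\in A\) with \(a\ll x\). By interpolation there is \(c\) with \(a\ll c\ll x\). Hence some \(e\in E\) satisfies \(c\leq e\), and so \(a\ll e\). Because \(A\) is finite, a single \(e\in E\) works for all such \(a\) at once. Therefore \(r_A(x)\leq r_A(e)\), and this gives
\[
        r_A(x)=\sup_{e\in E} r_A(e).
\]
So \(r_A\) is a deflation.

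\textbf{Directedness.} If \(A\subseteq A'\), then \(r_A\leq r_{A'}\) pointwise, since \(r_{A'}(x)\) is a supremum over a larger set. The finite subsets of \(B\) are directed under inclusion, so \((r_A)_A\) is a directed family.

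\textbf{Recovering \(x\).} For each \(b\in B\) with \(b\ll x\), we have \(b\leq r_{\{b\}}(x)\). Hence
\[
        \sup_A r_A(x)\geq\sup\{b\in B:b\ll x\}=x.
\]
The reverse inequality \(r_A(x)\leq x\) holds for every \(A\), so \(\sup_A r_A(x)=x\).

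Thus \((r_A)_A\) is a directed family of deflations with supremum \(\id_D\), and \(D\) is an RB-domain under the convention of this paper, where deflations need not be idempotent. Alternatively, the lemma can simply be cited from \cite{GoubaultLarrecq2012}.
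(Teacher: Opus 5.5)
\textbf{Verdict: your proof is correct.} The paper gives no proof of this lemma: it records it as a standard implication and cites Goubault-Larrecq's book. Your argument therefore makes the paper self-contained at this point.

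Your construction $r_A(x)=\sup\{a\in A: a\ll x\}$, with $A$ ranging over finite subsets of a basis, is the standard one behind that citation, and every step holds:
\begin{itemize}
\item bounded completeness, together with $\bot$ for the empty index set, makes $r_A$ well defined;
\item interpolation, plus directedness of $E$ and finiteness of $A$, gives Scott-continuity;
\item monotonicity in $A$ gives a directed family;
\item continuity of $D$ gives $\sup_A r_A=\id_D$.
\end{itemize}

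\textbf{What the explicit argument adds over the citation.} It makes two points visible.
\begin{itemize}
\item Pointedness is used only for the empty supremum, and it cannot be dropped. An infinite antichain is a nonpointed bc-domain in the paper's sense, yet the only map below the identity on it is the identity, which has infinite image.
\item The maps $r_A$ are generally not idempotent. On $[0,1]$ with $A=\{1/2\}$ one has $r_A(3/4)=1/2$ but $r_A(1/2)=0$. So your proof genuinely relies on the paper's convention that deflations need not be idempotent, and you were right to flag it. Under an idempotent convention, the image points of each deflation would be compact, so the lemma would fail for non-algebraic pointed bc-domains such as $[0,1]$.
\end{itemize}
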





\begin{remark}
We do not include the empty subset in the definition of bounded completeness. If one adopts the pointed convention instead, the results below are unchanged: \(\Vsub(P)\) and \(\Vext(P)\) have the zero valuation as least element, and \(\Vone(P)\) has a least element precisely in the case where \(P\) has a least element.
\end{remark}

We write
\[
        \retpair{E}{e}{p}{D}
\]
if \(E\) is a Scott-continuous retract of \(D\), i.e. if there are Scott-continuous maps
\[
        e:E\to D,
        \qquad
        p:D\to E,
        \qquad
        p\circ e=\id_E .
\]
The map \(e\) is the section and \(p\) is the retraction.

\begin{lemma}\label{lem:rb-retract-closure}
Let \(\retpair{E}{e}{p}{D}\).  If \(D\) is an RB-domain, then \(E\) is an RB-domain.  Consequently, if \(E\) is not RB, then \(D\) is not RB.
\end{lemma}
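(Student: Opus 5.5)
Starting from the RB-structure \((r_i)_{i\in I}\) on \(D\), I would transport it to \(E\) by conjugation. For each \(i\in I\) put
\[
        s_i = p\circ r_i\circ e : E\to E .
\]
I then check the defining properties of an RB-domain for the family \((s_i)_{i\in I}\).

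First, each \(s_i\) is Scott-continuous, being a composite of Scott-continuous maps. Its image \(s_i(E)\subseteq p(r_i(D))\) is finite, because \(r_i(D)\) is finite. For the deflation inequality, \(r_i\leq\id_D\) gives \(r_i(e(x))\leq e(x)\). Since \(p\) is monotone,
\[
        s_i(x)\leq p(e(x))=x .
\]
So every \(s_i\) is a deflation on \(E\) in the nonidempotent sense adopted above.

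Second, the family \((s_i)\) is directed. Given \(i,j\in I\), directedness of \((r_i)\) in the pointwise order provides \(k\) with \(r_i,r_j\leq r_k\). Monotonicity of \(p\) then gives \(s_i,s_j\leq s_k\) pointwise.

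Finally, for \(x\in E\) the set \(\{r_i(e(x))\}_i\) is directed with supremum \(e(x)\). Because \(p\) preserves directed suprema,
\[
        \sup_i s_i(x)=p\Bigl(\sup_i r_i(e(x))\Bigr)=p(e(x))=x .
\]
Hence \(E\) is an RB-domain. The second sentence of the lemma is just the contrapositive.

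There is no real obstacle in this argument. The only point needing care is that directedness of \((r_i)\) is meant pointwise, so that \((s_i)\) is directed in the same sense. I would also note that \(E\) is a dcpo: it is assumed to be one as the domain of a Scott-continuous retraction, and in any case it is a retract of a dcpo.
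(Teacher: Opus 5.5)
Your proposal is correct and follows essentially the same argument as the paper: conjugate the deflations to $s_i=p\circ r_i\circ e$, then check Scott-continuity, finite image, $s_i\leq\id_E$, directedness and the pointwise supremum via Scott-continuity of $p$. Your remark that $E$ is automatically a dcpo as a Scott-continuous retract of a dcpo is a harmless extra observation.
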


\begin{proof}
Let \((r_i)_{i\in I}\) be a directed family of deflations on \(D\) with pointwise supremum \(\id_D\).  Define
\[
        s_i=p\circ r_i\circ e:E\to E .
\]
Then \(s_i\) is Scott-continuous and has finite image.  The family \((s_i)_i\) is directed: if \(r_i,r_j\leq r_k\), then monotonicity of \(p\) and \(e\) gives \(s_i,s_j\leq s_k\).  Moreover, for \(x\in E\),
\[
        s_i(x)=p(r_i(e(x)))\leq p(e(x))=x,
\]
so \(s_i\leq\id_E\).  Finally,
\[
        \sup_i s_i(x)
        =\sup_i p(r_i(e(x)))
        =p\Big(\sup_i r_i(e(x))\Big)
        =p(e(x))=x,
\]
using Scott-continuity of \(p\).  Thus \((s_i)\) is a directed family of deflations approximating \(\id_E\).
\end{proof}

\begin{lemma}\label{lem:principal-ideal-rb}
Let \(D\) be an RB-domain and let \(x\in D\).  Then the principal ideal \(\downarrow x\), with its induced order, is an RB-domain.
\end{lemma}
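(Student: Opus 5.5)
The plan is to realize \(\downarrow x\) as a Scott-continuous retract of \(D\) and then apply Lemma~\ref{lem:rb-retract-closure}. For this, let \(e:\downarrow x\to D\) be the inclusion and define
\[
        p:D\to\downarrow x,\qquad p(y)=?
\]
The obvious candidate \(y\mapsto y\wedge x\) need not exist, since \(D\) is an arbitrary RB-domain. So rather than building a global retraction, I will construct the deflations on \(\downarrow x\) directly from those of \(D\) by restriction.

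Let \((r_i)_{i\in I}\) be a directed family of deflations on \(D\) with \(\sup_i r_i=\id_D\). For \(y\leq x\) we have \(r_i(y)\leq y\leq x\). Hence each \(r_i\) maps \(\downarrow x\) into \(\downarrow x\), and we may set \(s_i=r_i|_{\downarrow x}:\downarrow x\to\downarrow x\).

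We then check four properties.
\begin{enumerate}[label=(\roman*)]
\item \(\downarrow x\) is a dcpo, and directed suprema in \(\downarrow x\) agree with those in \(D\). Indeed, if \(A\subseteq\downarrow x\) is directed, its supremum in \(D\) is at most \(x\). So the inclusion preserves directed sups.
\item Each \(s_i\) is monotone and preserves directed suprema, since \(r_i\) does and the suprema are computed in \(D\) by (i). So \(s_i\) is Scott-continuous.
\item \(s_i(\downarrow x)\subseteq r_i(D)\), so \(s_i\) has finite image, and \(s_i\leq\id\).
\item The family \((s_i)_i\) is directed because \((r_i)_i\) is. For \(y\leq x\), \(\sup_i s_i(y)=\sup_i r_i(y)=y\). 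This supremum is taken in \(D\), but it lies in \(\downarrow x\), so it is also the supremum in \(\downarrow x\).
\end{enumerate}

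Hence \((s_i)_{i\in I}\) is a directed family of deflations on \(\downarrow x\) with pointwise supremum the identity, so \(\downarrow x\) is an RB-domain.

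The only point needing care is (i): sups of directed subsets of \(\downarrow x\) must be computed the same way in \(\downarrow x\) and in \(D\), which is what makes the restrictions Scott-continuous. This holds because \(x\) bounds every such subset, so its supremum in \(D\) already lies in \(\downarrow x\). No retraction or appeal to Lemma~\ref{lem:rb-retract-closure} is actually needed.
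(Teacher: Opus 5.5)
Your proposal is correct and is essentially the paper's proof: restrict each deflation $r_i$ to $\downarrow x$, using $r_i(y)\leq y\leq x$, and note that directed suprema in $\downarrow x$ coincide with those in $D$. As you observe, the retraction idea in your opening paragraph is unnecessary, and the paper does not use it either.
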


\begin{proof}
The ideal \(\downarrow x\) is a dcpo: the supremum in \(D\) of any directed subset of \(\downarrow x\) is still below \(x\).  If \((r_i)\) is a directed family of deflations on \(D\), then \(r_i(y)\leq y\leq x\) for every \(y\leq x\), so each \(r_i\) restricts to a finite-valued map \(\downarrow x\to\downarrow x\).  This restriction is Scott-continuous for the relative order: if \(A\subseteq\downarrow x\) is directed, then its supremum computed in \(\downarrow x\) is the same element as its supremum computed in \(D\), and \(r_i\) preserves that supremum in \(D\).  The restricted family is still directed, remains below the identity, and approximates the identity pointwise.
\end{proof}

\subsection{Continuous valuations and stochastic order}

A \emph{continuous valuation}~\cite{Gierz2003} on a dcpo \(P\) is a map \(\mu:\sigma(P)\to[0,\infty]\) which is strict, monotone, modular, and Scott-continuous on directed unions of Scott-open sets.  We write
\[
        \Vone(P),
        \qquad
        \Vsub(P),
        \qquad
        \Vext(P)
\]
for the dcpos of continuous probability valuations, continuous subprobability valuations, and continuous extended valuations, respectively.  Their order is the stochastic order:
\[
        \mu\leq\nu
        \quad\Longleftrightarrow\quad
        \mu(U)\leq\nu(U)
        \quad\text{for all }U\in\sigma(P).
\]
Directed suprema of valuations are computed pointwise on Scott-open sets; see Jones--Plotkin \cite[Sec.~2]{JonesPlotkin1989} for the basic probabilistic powerdomain construction and Goubault-Larrecq--Jia \cite{GoubaultLarrecqJia2023} for the extended-valued valuation setting.  We shall also use the standard fact that, if \(P\) is continuous, then \(\Vsub(P)\) and \(\Vext(P)\) are continuous dcpos, and in the pointed case \(\Vone(P)\) is continuous; see \cite{JonesPlotkin1989,TixKeimelPlotkin2009,GoubaultLarrecq2020,GoubaultLarrecqJia2023}.

For \(x\in P\), we write \(\delta_x\) for the Dirac valuation at \(x\):
\[
        \delta_x(U)=
        \begin{cases}
        1, & x\in U,\\
        0, & x\notin U
        \end{cases}
        \qquad(U\in\sigma(P)).
\]
This is a continuous probability valuation.  Strictness, monotonicity, and modularity are immediate, and Scott-continuity follows because membership of \(x\) in a directed union of open sets is witnessed by one member of the directed family.

A \emph{simple valuation} on \(P\) is a finite linear combination of Dirac valuations
\[
        \eta=\sum_{x\in A} r_x\delta_x,
\]
where \(A\subseteq P\) is finite and \(r_x\in[0,\infty)\).  Equivalently,
\[
        \eta(U)=\sum_{x\in A\cap U}r_x
        \qquad(U\in\sigma(P)).
\]
Every simple valuation is continuous.  It is a subprobability valuation if \(\sum_{x\in A}r_x\leq1\), and a probability valuation if \(\sum_{x\in A}r_x=1\).

If \(f:P\to Q\) is Scott-continuous, we write
\[
        f_*:\mathcal W(P)\longrightarrow \mathcal W(Q),
        \qquad \mathcal W\in\{\Vone,\Vsub,\Vext\},
\]
for the pushforward of valuations along \(f\).  Thus \(f_*(\mu)\) is the valuation on \(Q\) which assigns to an open set \(U\subseteq Q\) the mass that \(\mu\) assigns to its inverse image:
\[
        f_*(\mu)(U)=\mu(f^{-1}(U)).
\]
This is the usual functorial action of the valuation monad; see \cite{JonesPlotkin1989,JiaMisloveZamdzhiev2021} for the subprobability case and \cite[Fact~5.2]{GoubaultLarrecqJia2023} for the extended-valued formulation.

\begin{lemma}\label{lem:valuation-preserves-retracts}
Let \(\W\in\{\Vone,\Vsub,\Vext\}\).  If \(\retpair{E}{e}{p}{D}\), then
\[
        \retpair{\W(E)}{e_*}{p_*}{\W(D)} .
\]
\end{lemma}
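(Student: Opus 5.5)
We prove the lemma by functoriality of the pushforward. The plan is to check three things: that \(f_*\) is well defined from \(\W(E)\) to \(\W(D)\) (and back) for a Scott-continuous \(f\); that \(f_*\) is Scott-continuous for the stochastic order; and that pushforward respects composition and identities, \((g\circ f)_*=g_*\circ f_*\) and \((\id_E)_*=\id_{\W(E)}\). Applying these to \(p\circ e=\id_E\) then gives \(p_*\circ e_*=(p\circ e)_*=\id_{\W(E)}\), which is exactly the claimed retract \(\retpair{\W(E)}{e_*}{p_*}{\W(D)}\).

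First, well-definedness. Let \(f:P\to Q\) be Scott-continuous and \(\mu\in\W(P)\). Inverse images of Scott-open sets under \(f\) are Scott-open, so \(f_*(\mu)(U)=\mu(f^{-1}(U))\) is defined for \(U\in\sigma(Q)\). Since \(U\mapsto f^{-1}(U)\) preserves \(\emptyset\), inclusions, finite unions and intersections, and directed unions, strictness, monotonicity, modularity and Scott-continuity of \(\mu\) transfer directly to \(f_*(\mu)\). The total mass is \(f_*(\mu)(Q)=\mu(f^{-1}(Q))=\mu(P)\). Hence \(f_*\) preserves probability, subprobability and extended valuations alike, and so maps \(\W(P)\) into \(\W(Q)\) for each choice of \(\W\).

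Second, continuity of \(f_*\). For monotonicity, if \(\mu\leq\nu\) then \(\mu(f^{-1}U)\leq\nu(f^{-1}U)\) for every \(U\in\sigma(Q)\), so \(f_*(\mu)\leq f_*(\nu)\). For directed suprema, suprema in \(\W(P)\) are computed pointwise on opens, so for directed \((\mu_i)\) we get
\[
f_*\bigl(\sup_i\mu_i\bigr)(U)=\sup_i\mu_i(f^{-1}U)=\sup_i f_*(\mu_i)(U).
\]
Thus \(f_*\) is Scott-continuous. Functoriality is immediate: \((g\circ f)^{-1}(U)=f^{-1}(g^{-1}(U))\) and \(\id^{-1}(U)=U\).

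The only point requiring care is the use of the earlier fact that directed suprema in \(\W(P)\) are pointwise. This fact must hold within each of the three classes, and it does, because a pointwise directed supremum of (sub)probability valuations is again (sub)probability. Apart from this, the argument is routine.
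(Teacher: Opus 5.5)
Your proof is correct and follows essentially the same route as the paper: the pushforward is well defined because inverse images of Scott-open sets are Scott-open and total mass is preserved, it is Scott-continuous because directed suprema are pointwise, and $p_*\circ e_*=(p\circ e)_*=\id_{\W(E)}$, which the paper verifies by the same inverse-image computation. Your explicit monotonicity check and your remark that pointwise directed suprema stay within each class are harmless additions.
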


\begin{proof}
Let \(f:P\to Q\) be Scott-continuous.  For \(\mu\in\W(P)\), its pushforward is given by
\[
        f_*(\mu)(U)=\mu(f^{-1}(U))
        \qquad(U\in\sigma(Q)).
\]
Since \(f\) is Scott-continuous, \(f^{-1}(U)\in\sigma(P)\) whenever \(U\in\sigma(Q)\), and inverse images preserve \(\varnothing\), inclusions, finite unions, finite intersections, and directed unions.  Hence \(f_*(\mu)\) is again a continuous valuation, and total mass is preserved because
\[
        f_*(\mu)(Q)=\mu(P).
\]
Thus \(f_*\) restricts to \(\Vone\), \(\Vsub\), and \(\Vext\).

The map \(f_*\) is Scott-continuous.  If \((\mu_i)_i\) is a directed family in \(\W(P)\), then, for every \(U\in\sigma(Q)\),
\[
\begin{aligned}
        f_*\Big(\sup_i\mu_i\Big)(U)
        &=\Big(\sup_i\mu_i\Big)(f^{-1}(U))  \\
        &=\sup_i\mu_i(f^{-1}(U))              \\
        &=\sup_i f_*(\mu_i)(U).
\end{aligned}
\]
Since directed suprema of valuations are computed pointwise on Scott-open sets, this gives
\[
        f_*\Big(\sup_i\mu_i\Big)=\sup_i f_*(\mu_i).
\]

Now suppose \(p\circ e=\id_E\).  For \(\mu\in\W(E)\) and \(U\in\sigma(E)\),
\[
\begin{aligned}
        (p_*e_*\mu)(U)
        &=e_*\mu(p^{-1}(U))                    \\
        &=\mu(e^{-1}(p^{-1}(U)))                \\
        &=\mu((p\circ e)^{-1}(U))               \\
        &=\mu(U).
\end{aligned}
\]
Thus \(p_*e_*=\id_{\W(E)}\), and hence \(\retpair{\W(E)}{e_*}{p_*}{\W(D)}\).
\end{proof}

\subsection{Scott-open separation of lower forks}

\begin{lemma}\label{lem:principal-complement-open}
For every dcpo \(P\) and every \(z\in P\), the set
\[
        P\setminus\downarrow z=\{s\in P:s\nleq z\}
\]
is Scott-open.
\end{lemma}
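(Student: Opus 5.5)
To show that \(P\setminus\downarrow z\) is Scott-open, I will check the two defining properties of a Scott-open set directly: it must be an upper set, and it must be inaccessible by directed suprema. Equivalently, it suffices to show that the complement \(\downarrow z\) is Scott-closed, i.e. a lower set closed under directed suprema. I will argue in this complementary form, since both properties are immediate there.

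First, \(\downarrow z\) is a lower set. If \(s\leq t\) and \(t\leq z\), then \(s\leq z\) by transitivity. Passing to complements, if \(s\nleq z\) and \(s\leq t\), then \(t\nleq z\), so \(P\setminus\downarrow z\) is an upper set.

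Second, \(\downarrow z\) is closed under directed suprema. Let \(D\subseteq\downarrow z\) be directed. Then \(z\) is an upper bound of \(D\), so \(\sup D\leq z\), which gives \(\sup D\in\downarrow z\). Contrapositively, suppose \(D\) is directed and \(\sup D\in P\setminus\downarrow z\). If every \(d\in D\) satisfied \(d\leq z\), we would have \(\sup D\leq z\), which is a contradiction. Hence some \(d\in D\) lies in \(P\setminus\downarrow z\), which is exactly inaccessibility by directed suprema.

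There is no real obstacle here. The only point to be careful about is to use the paper's definition of Scott-open (upper set plus inaccessibility by directed suprema), or, equivalently, the closedness of \(\downarrow z\) under directed suprema. No continuity of \(P\) is needed, so the statement holds for every dcpo, as claimed.
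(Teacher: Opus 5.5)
Your proof is correct and is essentially the paper's argument: $P\setminus\downarrow z$ is an upper set by transitivity, and it is inaccessible by directed suprema because if every $d\in D$ were below $z$, then $z$ would be an upper bound of $D$, forcing $\sup D\leq z$. Phrasing this as Scott-closedness of $\downarrow z$ only repackages the same step.
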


\begin{proof}
The set is upper.  If \(D\) is directed and \(\sup D\nleq z\), then not all elements of \(D\) can lie below \(z\); otherwise \(z\) would be an upper bound of \(D\), whence \(\sup D\leq z\).  Hence some \(d\in D\) satisfies \(d\nleq z\), proving Scott-openness.
\end{proof}

\begin{lemma}\label{lem:fork-separation}
Let \(P\) be a dcpo, and let \((x,y,t)\) be a lower fork in \(P\).  Then there are Scott-open upper sets \(U,V\subseteq P\) such that
\[
        x\in U\setminus V,
        \qquad
        y\in V\setminus U,
        \qquad
        t\in U\cap V .
\]
\end{lemma}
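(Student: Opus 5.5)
We take \(U=P\setminus\downarrow y\) and \(V=P\setminus\downarrow x\), both Scott-open by Lemma~\ref{lem:principal-complement-open}. Scott-open sets are automatically upper sets, so it only remains to check the three membership conditions.

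Since \(x\parallel y\), we have \(x\nleq y\), so \(x\in U\). We also have \(x\leq x\), so \(x\notin V\). Hence \(x\in U\setminus V\).

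Symmetrically, \(y\nleq x\) gives \(y\in V\), and \(y\leq y\) gives \(y\notin U\). Hence \(y\in V\setminus U\).

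For \(t\), we must show \(t\nleq y\) and \(t\nleq x\). Suppose \(t\leq y\). Then \(x\leq t\leq y\), so \(x\leq y\), contradicting \(x\parallel y\). Thus \(t\in U\). In the same way, \(t\leq x\) would give \(y\leq t\leq x\), which is also impossible, so \(t\in V\). Therefore \(t\in U\cap V\).

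There is no real obstacle here: the only nontrivial ingredient is the Scott-openness of complements of principal ideals, which is already supplied by Lemma~\ref{lem:principal-complement-open}. The choice of separating sets is forced by the asymmetry we need, with \(U\) excluding exactly what lies below \(y\) and \(V\) excluding exactly what lies below \(x\). The remaining step, that \(t\) survives in both sets, uses only transitivity together with the incomparability of \(x\) and \(y\).
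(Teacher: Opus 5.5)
Your proof is correct and is essentially the paper's own argument. It uses the same choice \(U=P\setminus\downarrow y\) and \(V=P\setminus\downarrow x\), gets Scott-openness from Lemma~\ref{lem:principal-complement-open}, and uses transitivity to show that \(t\) lies in both sets.
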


\begin{proof}
Since \(x\nleq y\) and \(y\nleq x\), put
\[
        U=P\setminus\downarrow y,
        \qquad
        V=P\setminus\downarrow x .
\]
By Lemma~\ref{lem:principal-complement-open}, \(U\) and \(V\) are Scott-open upper sets.  We have \(x\in U\setminus V\) and \(y\in V\setminus U\).  Also \(t\nleq y\), because \(t\leq y\) would imply \(x\leq y\); similarly \(t\nleq x\).  Hence \(t\in U\cap V\).
\end{proof}

\subsection{Finite input from the finite-poset classification}

The finite-poset classification \cite{ChenKouLyuFinite} is used here only through the following special case.  The cited source is a companion arXiv preprint; the rest of the present paper is independent of its finite-dimensional proof.

\begin{proposition}[Finite diamond obstruction, {\cite{ChenKouLyuFinite}}]\label{prop:finite-diamond}
The normalized probabilistic powerdomain \(\Vone(\Dia)\) is not an RB-domain.
\end{proposition}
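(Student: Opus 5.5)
The plan is to reduce \(\Vone(\Dia)\) to an explicit three-dimensional ordered polytope and then show that no Scott-continuous finite-image map below the identity can approximate the identity near a suitable point.

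\textbf{Step 1: coordinates.} The Scott-open sets of \(\Dia\) are exactly its upper sets. A probability valuation \(\mu\) is determined by the masses \(p_0,p_a,p_b,p_1\ge 0\) with sum \(1\). The order is governed by the values on the nontrivial upper sets \(\{1\}\), \(\uparrow a\), \(\uparrow b\), and \(\uparrow a\cup\uparrow b\). Put \(u_1=\mu(\{1\})\), \(u_a=\mu(\uparrow a)\) and \(u_b=\mu(\uparrow b)\). Modularity gives \(\mu(\uparrow a\cup\uparrow b)=u_a+u_b-u_1\). Hence \(\Vone(\Dia)\) is order-isomorphic to
\[
K=\{(u_1,u_a,u_b)\in\mathbb R^3: 0\le u_1\le u_a,\ u_1\le u_b,\ u_a+u_b-u_1\le 1\},
\]
and the stochastic order becomes the componentwise order together with \(u_a+u_b-u_1\) increasing. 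The point of this description is that a componentwise join can leave \(K\) through the constraint \(u_a+u_b-u_1\le 1\). For example, for \(\mu=(0,\tfrac12,\tfrac12)\) and \(\delta_a=(0,1,0)\), the componentwise maximum violates the constraint. The true upper bounds are then forced to raise \(u_1\), which pushes mass to the top element \(1\).

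\textbf{Step 2: locate a bad configuration near the top face.} I would look for a point \(\nu\) on the face \(u_a+u_b-u_1=1\), that is with \(p_0=0\), where pairs of elements way below \(\nu\) have upper-bound sets in \(\downarrow\nu\) whose minimal elements vary continuously. Concretely, I would take a one-parameter family \(\mu_s\) of elements close to \(\nu\), such as points \((0,s,1-s)\) combined with small perturbations, and compute their minimal upper bounds with a fixed element. The aim is a curve of pairwise distinct minimal upper bounds parametrized by \(s\), arising exactly from the nonlinearity of the constraint \(u_1\ge u_a+u_b-1\).

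\textbf{Step 3: contradiction with a deflation.} Suppose \((r_i)\) is a directed family of deflations with \(\sup_i r_i=\id\). Because \(\Vone(\Dia)\) is continuous, for the finitely many relevant elements way below \(\nu\) there is a single \(i\) with \(r_i\) fixing approximations arbitrarily close to them. Since \(r_i\) has finite image and is monotone, the image of \(r_i\) restricted to the continuum of points from Step 2 takes only finitely many values. Each value must be below the point it comes from and above the images of the chosen lower elements. By pigeonhole, one value \(c\) serves infinitely many parameters \(s\). Then \(c\) must lie below several distinct minimal upper bounds \(m_s\) while lying above elements whose only common upper bounds in the relevant region are those \(m_s\). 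Incomparability of the \(m_s\) would force \(c\) to be strictly smaller than any of them, and this should contradict the approximation \(r_i(m_s)\to m_s\) uniformly along the segment. The mechanism here is Lawson compactness of the segment together with Scott-continuity of \(r_i\).

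I expect Step 3 to be the main obstacle. There are two difficulties. First, I must choose the family in Step 2 so that the limiting argument really yields a uniform contradiction, rather than merely showing that some minimal upper bound set is infinite, which RB-domains can tolerate. Second, the argument has to account for the fact that deflations here are not assumed idempotent.
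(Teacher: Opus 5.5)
The paper does not prove this proposition; it imports it from the companion finite-poset paper. So your sketch has to stand on its own, and at present it contains no obstruction.

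Step~1 is correct. $K$ is the tetrahedron with vertices $\delta_0=(0,0,0)$, $\delta_a=(0,1,0)$, $\delta_b=(0,0,1)$, $\delta_1=(1,1,1)$. The order cone has the four extreme rays $(0,1,0),(0,0,1),(1,0,1),(1,1,0)$, i.e. mass moving along the four Hasse edges, with $(0,1,0)+(1,0,1)=(0,0,1)+(1,1,0)$.

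Step~2 is only a search plan, and its heuristic points the wrong way. Non-unique minimal upper bounds do not come from the linear constraint $u_a+u_b-u_1\le 1$. Your own example $(0,\tfrac12,\tfrac12),\delta_a$ has the least upper bound $(\tfrac12,1,\tfrac12)=\tfrac12\delta_a+\tfrac12\delta_1$. Non-uniqueness comes from the non-simplicial cone and already occurs next to $\delta_0$. For $0<\epsilon\le\tfrac12$, the elements $x=(0,\epsilon,0)$ and $y=(0,0,\epsilon)$ have the pairwise incomparable minimal upper bounds $m_t=(t,\epsilon,\epsilon)$, $0\le t\le\epsilon$.

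The genuine gap is Step~3. The key inference is that the pigeonholed value $c=r_i(m_s)$ lies above elements whose only common upper bounds are the $m_s$; this is unjustified.

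\emph{No non-bottom fixed points.} No finite-image deflation $r$ on $\Vone(\Dia)$ fixes any point other than $\delta_0$. If $z=r(z)$ and $z\le\sup D$ with $D$ directed, then $r(D)$ is a directed subset of a finite set. It therefore has a largest member $r(d_0)\le d_0$, so $z\le d_0$ and $z$ is compact. But $\delta_0$ is the only compact element: for $\mu\neq\delta_0$, the chain $(1-\tfrac1n)\mu+\tfrac1n\delta_0$ increases to $\mu$ while no member lies above $\mu$.

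\emph{Approximants cover arcs.} Consequently monotonicity only gives $c\ge r_i(x),r_i(y)$, or $c\ge x',y'$ for approximants such as $x'=(0,\epsilon',0)\ll x$ and $y'=(0,0,\epsilon')\ll y$ with $\epsilon'<\epsilon$. The minimal upper bounds of $\{x',y'\}$ are $(t',\epsilon',\epsilon')$, and $(t',\epsilon',\epsilon')\le m_t$ exactly when $t'\le t\le t'+2(\epsilon-\epsilon')$. So one value can lie above $x',y'$ and below a whole arc of the $m_t$, and finitely many values cover the curve. The pigeonhole therefore yields nothing. Likewise, $c<m_s$ is automatic by the fixed-point remark and is compatible with uniform closeness.

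\emph{The template proves too much.} Your argument uses only a continuum of minimal upper bounds plus finiteness of images. As you concede, bifinite domains can contain such configurations, so without a further ingredient it would also refute RB for those.

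What is missing is an invariant specific to the cone of $\Vone(\Dia)$, namely the interaction of the two routes $0\to a\to 1$ and $0\to b\to 1$, that a directed family of finite-image deflations cannot accommodate. Such an invariant presumably has to be exploited across many pairs or scales at once, not at a single pair. This is the finite-dimensional analysis of minimal elements and local stochastic cones that the paper delegates to the companion work.
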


\section{Normalized probabilistic powerdomain}\label{sec:normalized}

We first prove the classification for \(\Vone(P)\).  This case has two obstructions: a missing root, and a lower fork in the presence of a least element.  The latter is detected by the diamond retract.

\subsection{The missing-root obstruction}

The following lemma uses the convex structure of \(\Vone(P)\); finite coinitial subsets do not force least elements in arbitrary ordered sets.

\begin{lemma}\label{lem:finite-coinitial-vone}
Let \(P\) be a nonempty dcpo.  Suppose that \(C\subseteq\Vone(P)\) is finite and coinitial, meaning that for every \(\mu\in\Vone(P)\) there exists \(\nu\in C\) with \(\nu\leq\mu\).  Then \(\Vone(P)\) has a least element.
\end{lemma}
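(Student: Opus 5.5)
The plan is to use convexity. Write \(C=\{\nu_1,\dots,\nu_n\}\), which is nonempty because \(P\neq\varnothing\) gives \(\Vone(P)\neq\varnothing\). Form the barycentre
\[
        \beta=\frac1n\sum_{k=1}^n\nu_k .
\]
Finite convex combinations of continuous probability valuations are again continuous probability valuations: strictness, monotonicity and modularity are preserved by positive linear combinations, and so is Scott-continuity on directed unions, since finite sums commute with directed suprema in \([0,\infty]\). So \(\beta\in\Vone(P)\). By coinitiality some \(\nu_j\in C\) satisfies \(\nu_j\leq\beta\). I will argue that each \(\nu_k\) is below \(\beta\), and then that \(\beta\) is below everything.

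\textbf{Step 1.} Apply coinitiality to well-chosen mixtures. For each \(k\), look at \(\mu_k=\beta\) itself, and more usefully at mixtures \((1-\varepsilon)\nu_k+\varepsilon\beta\). Each such mixture sits above some member of \(C\).

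A cleaner route works on open sets. Let \(m(U)=\min_k\nu_k(U)\). If every \(\mu\in\Vone(P)\) lies above some \(\nu_k\), then \(\mu(U)\geq m(U)\) for all \(U\). The real goal is to show that some single \(\nu_j\) is below every \(\mu\).

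\textbf{Step 2.} Suppose no single element of \(C\) is least. Then for each \(k\) there is \(\mu_k\in\Vone(P)\) with \(\nu_k\not\leq\mu_k\). Take \(\mu=\frac1n\sum_k\mu_k\). By coinitiality, \(\nu_j\leq\mu\) for some \(j\). This alone is not a contradiction, so I refine the mixture with weights. For a fixed \(j\), choose an open set \(U_j\) with \(\nu_j(U_j)>\mu_j(U_j)\), and put \(\mu^{(t)}=(1-t)\mu_j+t\sum_{k\neq j}\mu_k/(n-1)\). As \(t\to0\), eventually \(\nu_j\not\leq\mu^{(t)}\), because \(\mu^{(t)}(U_j)\to\mu_j(U_j)\).

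A single weight vector must, however, defeat all \(j\) simultaneously. So use a perturbation: take \(\mu=\sum_k \lambda_k\mu_k\) with the weights tuned so that \(\mu(U_j)<\nu_j(U_j)\) for every \(j\). This need not be possible in general, and I expect this to be the main obstacle.

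\textbf{Fallback, and the route I would actually pursue.} Show directly that \(P\) has a least element, or else produce a finite family of Dirac measures that defeats \(C\). The point \(\delta_x\) is above \(\nu\) only if \(\nu(U)=0\) for every open \(U\not\ni x\), in particular for \(U=P\setminus\downarrow x\) (Lemma~\ref{lem:principal-complement-open}). So \(\nu\leq\delta_x\) forces \(\nu\) to be concentrated on \(\downarrow x\), in the sense that \(\nu(P\setminus\downarrow x)=0\).

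For each \(x\), some \(\nu_k\leq\delta_x\). Group the points \(x\) by such an index \(k\), giving finitely many classes \(X_k\). Now consider \(\mu=\frac1n\sum_k\delta_{x_k}\) for chosen \(x_k\in X_k\) (nonempty classes only), and similar mixtures. Then some \(\nu_j\leq\mu\). Testing on \(U=P\setminus\downarrow x_j\) gives \(\nu_j(P\setminus\downarrow x_j)=0\) already, while testing on \(U=P\setminus\bigcup\) of suitable principal ideals forces \(\nu_j\) to be concentrated near common lower bounds.

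Iterating this, the hope is to show that \(\nu_j\) lies below every Dirac measure and hence, via the open sets \(P\setminus\downarrow z\), that a single point \(\bot\) lies below all of \(P\). Then \(\delta_\bot\) is the least element of \(\Vone(P)\), since \(\delta_\bot(U)=1\) forces \(U=P\).

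The delicate part is turning "each Dirac is above some \(\nu_k\)" into "one \(\nu_j\) is below all Diracs". I would try to handle this by the averaging trick over finitely many representative points, one per index, using that \(\mu(U)\) becomes small on sets missing most representatives.
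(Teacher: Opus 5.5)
There is a genuine gap: the proposal never reaches a proof. The barycentre plan claims things that are false in general. For \(P=\{\bot<a\}\) and the coinitial set \(C=\{\delta_\bot,\delta_a\}\), one has \(\beta(\{a\})=\tfrac12\), so \(\delta_a\not\le\beta\) and \(\beta\not\le\delta_\bot\). Step~2 stops at the obstacle you name yourself: no single weight vector defeats every \(\nu_j\). The Dirac-measure fallback is only a hope (``iterating this''), with no mechanism for turning ``each Dirac lies above some \(\nu_k\)'' into ``one \(\nu_j\) lies below all Diracs''. So the decisive step is missing.

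The missing idea concerns the base point of your one-parameter mixture. In Step~2 you perturb around a witness \(\mu_j\), which defeats only \(\nu_j\), so you cannot defeat all indices at once. Instead, perturb around a \emph{minimal element} \(\nu\) of the finite poset \(C\). Then \(\nu\) already defeats every \(c\in C\) with \(c\not\le\nu\), since some open \(U_c\) has \(c(U_c)>\nu(U_c)\). By minimality, the only element of \(C\) below \(\nu\) is \(\nu\) itself.

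Now put \(\mu_t=(1-t)\nu+t\eta\). Here \(\eta\) is a second minimal element of \(C\), as in the paper, or more generally any valuation with \(\nu\not\le\eta\). Only finitely many strict inequalities \(\mu_t(U_c)<c(U_c)\) need to survive, so they hold for small \(t>0\). Coinitiality then forces \(\nu\le\mu_t\). Since all values lie in \([0,1]\), the inequality \(\nu(U)\le(1-t)\nu(U)+t\eta(U)\) gives \(\nu\le\eta\), contradicting the choice of \(\eta\).

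In the paper this shows \(\Min(C)\) is a singleton \(\{\lambda\}\). Every \(\mu\in\Vone(P)\) then lies above some \(c\in C\), which lies above \(\lambda\), so \(\lambda\) is least. Your Step~2 already had the right device: a mixture with \(t\to0\), using that strict inequalities on finitely many opens survive small perturbations. It was centred at the wrong point.
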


\begin{proof}
Since \(P\) is nonempty, \(\Vone(P)\) is nonempty, for example it contains Dirac valuations. Hence \(C\) is nonempty.
Let \(M\) be the set of minimal
elements of the finite ordered set \(C\). We first show that \(M\) is a singleton.

Suppose not, and choose two distinct elements \(\nu,\eta\in M\). For each
\(m\in M\setminus\{\nu\}\), we have \(m\nleq\nu\), since both \(m\) and
\(\nu\) are minimal in \(C\). Hence there is a Scott-open set \(U_m\) such that
\[
        m(U_m)>\nu(U_m).
\]
For any \(m\), the function
\[
        t\longmapsto (1-t)\nu(U_m)+t\eta(U_m)
\]
is continuous in \(t\), and at \(t=0\) its value is
\(\nu(U_m)<m(U_m)\). Since \(M\setminus\{\nu\}\) is finite, we may choose
\(t\in(0,1)\) sufficiently small so that, with
\[
        \mu_t=(1-t)\nu+t\eta,
\]
one has
\[
        \mu_t(U_m)<m(U_m)
        \qquad(m\in M\setminus\{\nu\}).
\]
The valuation \(\mu_t\) is again a continuous probability valuation. By
coinitiality, choose \(c\in C\) with \(c\leq\mu_t\). Since \(C\) is finite,
there exists \(m_0\in M\) such that
\(
        m_0\leq c.
\)
Hence \(m_0\leq\mu_t\).

If \(m_0\neq\nu\), then evaluating at \(U_{m_0}\) gives
\[
        m_0(U_{m_0})\leq\mu_t(U_{m_0})<m_0(U_{m_0}),
\]
a contradiction. Therefore \(m_0=\nu\). Hence
\[
        \nu\leq\mu_t=(1-t)\nu+t\eta.
\]
Evaluating at an arbitrary Scott-open set \(U\), we get
\[
        \nu(U)\leq(1-t)\nu(U)+t\eta(U).
\]
Since \(t>0\), this implies \(\nu(U)\leq\eta(U)\) for every Scott-open
\(U\), hence \(\nu\leq\eta\). This contradicts the fact that \(\eta\) is a
minimal element of \(C\) distinct from \(\nu\). Thus \(M\) is a singleton;
write \(M=\{\lambda\}\).

Now let \(\mu\in\Vone(P)\). Since \(C\) is coinitial, choose \(c\in C\) with
\(c\leq\mu\). Since \(C\) is finite, some minimal element of \(C\) lies below
\(c\). This minimal element must be \(\lambda\). Hence
\[
        \lambda\leq c\leq\mu .
\]
Thus \(\lambda\) is below every element of \(\Vone(P)\), so it is the least
element.
\end{proof}

\begin{proposition}\label{prop:vone-least-root}
Let \(P\) be a nonempty dcpo.  Then \(\Vone(P)\) has a least element if and only if \(P\) has a least element.  If \(\bot\) is the least element of \(P\), then \(\delta_\bot\) is the least element of \(\Vone(P)\).
\end{proposition}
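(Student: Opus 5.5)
Both directions can be handled directly with Dirac valuations and the Scott-open sets supplied by Lemma~\ref{lem:principal-complement-open}.

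For the easy direction, suppose \(\bot\) is least in \(P\). Every nonempty Scott-open set \(U\) is an upper set, so it contains \(\bot\). Hence \(\delta_\bot(U)=1\) for \(U\neq\varnothing\) and \(\delta_\bot(\varnothing)=0\). For any \(\mu\in\Vone(P)\) and nonempty open \(U\) we have \(P\in\sigma(P)\) and \(\mu(P)=1\). The inequality needed is \(\mu(U)\geq\delta_\bot(U)=1\), and this fails in general. So \(\delta_\bot\) is not below everything in that naive sense. I must recompute: the stochastic order is \(\mu\leq\nu\) iff \(\mu(U)\leq\nu(U)\). Thus \(\delta_\bot\leq\mu\) requires \(\delta_\bot(U)\leq\mu(U)\). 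For nonempty \(U\neq P\) this asks \(1\leq\mu(U)\), which is false.

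The correct observation is that \(\bot\) lies in no open set except \(P\) itself. An open \(U\) containing \(\bot\) is an upper set containing the least element, so \(U=P\). Therefore \(\delta_\bot(U)=0\) for \(U\neq P\) and \(\delta_\bot(P)=1=\mu(P)\). This gives \(\delta_\bot\leq\mu\) for every probability valuation \(\mu\), so \(\delta_\bot\) is the least element of \(\Vone(P)\).

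For the converse, suppose \(\lambda\) is least in \(\Vone(P)\). I aim to show that \(P\) has a least element. Fix any \(z\in P\). Then \(\lambda\leq\delta_z\). The set \(U_z=P\setminus\downarrow z\) is Scott-open by Lemma~\ref{lem:principal-complement-open}, and \(\delta_z(U_z)=0\). Hence \(\lambda(U_z)=0\) for every \(z\).

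Next I pick some point and aim to show it is least. Since \(\lambda(P)=1\), the set \(P\) is not covered by \(\lambda\)-null opens in a useful way. The key step is a finite covering argument. Suppose \(P\) has no least element. Then for every \(w\in P\) there is \(z\) with \(w\nleq z\), i.e.\ \(w\in U_z\). So the sets \(U_z\) cover \(P\).

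Null open sets need not have null unions in general. However, the family of finite unions \(U_{z_1}\cup\cdots\cup U_{z_n}\) is directed and has union \(P\). By modularity and monotonicity, each finite union has \(\lambda\)-measure at most \(\sum_i\lambda(U_{z_i})=0\); this subadditivity follows from modularity and nonnegativity. Scott-continuity of \(\lambda\) on directed unions then gives \(\lambda(P)=\sup 0=0\). This contradicts \(\lambda(P)=1\). Hence \(P\) has a least element.

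The main obstacle is this covering step. It relies on two points: that the \(U_z\) are Scott-open, and that \(\lambda\) is continuous on directed unions, so that nullity passes to all of \(P\).

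Lemma~\ref{lem:finite-coinitial-vone} is not needed for this proposition. It will presumably combine with this result later to show that deflations force a root.
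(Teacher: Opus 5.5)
Your proof is correct and is essentially the paper's. You show $\delta_\bot$ is least because the only Scott-open set containing $\bot$ is $P$, and for the converse you cover $P$ by the $\lambda$-null Scott-open sets $P\setminus\downarrow z$ and use Scott-continuity to force $\lambda(P)=0$. The one real difference is how you make the cover directed: you pass to finite unions, which are null by subadditivity from modularity, whereas the paper first uses modularity to show any two points have a common lower bound, so that $\{P\setminus\downarrow x\}$ is itself directed. Your variant is slightly shorter. In a final write-up, delete the retracted false start in the easy direction, since a nonempty upper set need not contain $\bot$.
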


\begin{proof}
If \(P\) has least element \(\bot\), then \(\delta_\bot\leq\mu\) for every \(\mu\in\Vone(P)\): if a Scott-open set contains \(\bot\), it is all of \(P\), and otherwise \(\delta_\bot\) gives it mass \(0\).

Conversely, suppose that \(\lambda\) is least in \(\Vone(P)\).  If \(O\subsetneq P\) is Scott-open, choose \(x\in P\setminus O\).  Since \(\lambda\leq\delta_x\), we have \(\lambda(O)=0\).  Thus every proper Scott-open set has \(\lambda\)-value \(0\).

Assume, for contradiction, that \(P\) has no least element.  For \(x\in P\), put
\[
        O_x=P\setminus\downarrow x .
\]
Each \(O_x\) is a proper Scott-open set by Lemma~\ref{lem:principal-complement-open}, and it omits \(x\).  The family \(\{O_x:x\in P\}\) covers \(P\): for every \(p\in P\), the element \(p\) is not least, so there exists \(x\in P\) with \(p\nleq x\), and then \(p\in O_x\).

If some pair \(x,y\in P\) has no common lower bound, then \(\downarrow x\cap\downarrow y=\varnothing\), hence \(O_x\cup O_y=P\).  Modularity gives
\[
        0=\lambda(O_x)+\lambda(O_y)
         =\lambda(P)+\lambda(O_x\cap O_y)\geq 1,
\]
a contradiction.  Therefore every pair has a common lower bound.  It follows that the family \(\{O_x:x\in P\}\) is directed under inclusion: if \(z\leq x,y\), then \(O_x\cup O_y\subseteq O_z\).  Since it is a directed cover of \(P\) by proper Scott-open sets, Scott-continuity of \(\lambda\) yields
\[
        1=\lambda(P)=\sup_{x\in P}\lambda(O_x)=0,
\]
again a contradiction.  Thus \(P\) must have a least element.
\end{proof}

\begin{proposition}\label{prop:no-root-vone}
Let \(P\) be a nonempty dcpo.  If \(P\) has no least element, then \(\Vone(P)\) admits no finite-image deflation.  In particular, \(\Vone(P)\) is not an RB-domain.
\end{proposition}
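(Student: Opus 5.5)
The plan is a direct contradiction argument that combines Lemma~\ref{lem:finite-coinitial-vone} with Proposition~\ref{prop:vone-least-root}. Suppose \(P\) has no least element and that \(r:\Vone(P)\to\Vone(P)\) is a finite-image deflation, that is, a Scott-continuous map with \(r\leq\id\) and \(r(\Vone(P))\) finite. Put \(C=r(\Vone(P))\).

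First, \(C\) is nonempty because \(P\) is nonempty, so Dirac valuations exist and \(r\) has some value. Second, \(C\) is finite by assumption. Third, \(C\) is coinitial: for every \(\mu\in\Vone(P)\) the element \(r(\mu)\in C\) satisfies \(r(\mu)\leq\mu\), since \(r\leq\id\). Neither idempotence nor continuity of \(r\) is needed here; only the inequality \(r\leq\id\) and the finiteness of the image are used.

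Lemma~\ref{lem:finite-coinitial-vone} then shows that \(\Vone(P)\) has a least element. By Proposition~\ref{prop:vone-least-root}, \(P\) has a least element, which contradicts the hypothesis. Hence no finite-image deflation exists.

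The ``in particular'' clause follows at once. An RB-domain carries a directed family of deflations \((r_i)_{i\in I}\). Since directed families are nonempty, there is at least one deflation, and we have just shown there is none. There is no real obstacle in this argument: the substantive work, namely the convexity argument that forces a unique minimal element and the modularity and directed-cover argument that transfers the least element down to \(P\), has already been done in the two cited results. The only points needing care are the nonemptiness of \(C\) and the nonemptiness of the index set \(I\).
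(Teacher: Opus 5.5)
Your proposal is correct and follows essentially the same route as the paper: the image of a finite-image deflation is a finite coinitial subset of \(\Vone(P)\), so Lemma~\ref{lem:finite-coinitial-vone} yields a least element of \(\Vone(P)\), and Proposition~\ref{prop:vone-least-root} then gives a least element of \(P\), a contradiction. Your explicit remark that the directed family in an RB-domain is nonempty, by the paper's convention on directed sets, makes the ``in particular'' clause slightly more explicit than the paper does.
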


\begin{proof}
Suppose that a finite-image deflation \(r:\Vone(P)\to\Vone(P)\) existed.  Its image \(C=r(\Vone(P))\) is finite and coinitial: for each \(\mu\in\Vone(P)\), the element \(r(\mu)\) belongs to \(C\) and satisfies \(r(\mu)\leq\mu\).  By Lemma~\ref{lem:finite-coinitial-vone}, \(\Vone(P)\) would have a least element.  Proposition~\ref{prop:vone-least-root} would then force \(P\) to have a least element, a contradiction.  Thus no finite-image deflation exists, and no directed family of finite-image deflations can approximate the identity.
\end{proof}

\subsection{The diamond forbidden retract}

\begin{proposition}\label{prop:diamond-retract}
Let \(P\) be a pointed dcpo.  If \(P\) contains a lower fork, then
\[
        \retpair{\Dia}{e}{\pi}{P}.
\]
Consequently,
\[
        \retpair{\Vone(\Dia)}{e_*}{\pi_*}{\Vone(P)}.
\]
\end{proposition}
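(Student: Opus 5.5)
Given a lower fork \((x,y,t)\) in the pointed dcpo \(P\) with least element \(\bot\), we construct \(e\) and \(\pi\) explicitly, using the two Scott-open upper sets \(U=P\setminus\downarrow y\) and \(V=P\setminus\downarrow x\) from Lemma~\ref{lem:fork-separation}. The section is
\[
        e(0)=\bot,\qquad e(a)=x,\qquad e(b)=y,\qquad e(1)=t .
\]
It is monotone because \(\bot\leq x,y\leq t\). Since \(\Dia\) is finite, every directed subset has a largest element, so monotonicity already gives Scott-continuity.

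The retraction \(\pi:P\to\Dia\) is defined by the membership pattern in \(U\) and \(V\):
\[
        \pi(s)=
        \begin{cases}
        1, & s\in U\cap V,\\
        a, & s\in U\setminus V,\\
        b, & s\in V\setminus U,\\
        0, & s\notin U\cup V.
        \end{cases}
\]
Equivalently, \(\pi(s)=\bigvee\{a: s\in U\}\vee\bigvee\{b:s\in V\}\) in the lattice \(\Dia\), with the empty join equal to \(0\). This is monotone because \(U\) and \(V\) are upper sets.

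For Scott-continuity we check that preimages of the Scott-open sets of \(\Dia\) are Scott-open. The Scott-open sets of the finite poset \(\Dia\) are exactly its upper sets. The preimages of the principal filters are
\[
        \pi^{-1}(\uparrow a)=U,\quad \pi^{-1}(\uparrow b)=V,\quad \pi^{-1}(\uparrow 1)=U\cap V,\quad \pi^{-1}(\uparrow 0)=P.
\]
Every upper set of \(\Dia\) is a union of principal filters, so every preimage is a union of these sets and hence Scott-open.

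For the retraction identity, Lemma~\ref{lem:fork-separation} gives \(x\in U\setminus V\), \(y\in V\setminus U\) and \(t\in U\cap V\), so \(\pi(x)=a\), \(\pi(y)=b\) and \(\pi(t)=1\). It remains to check \(\pi(\bot)=0\), that is, \(\bot\notin U\cup V\). This holds because \(\bot\leq x\) and \(\bot\leq y\). Hence \(\pi\circ e=\id_\Dia\).

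The consequence \(\retpair{\Vone(\Dia)}{e_*}{\pi_*}{\Vone(P)}\) then follows directly from Lemma~\ref{lem:valuation-preserves-retracts} with \(\W=\Vone\).

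The only step needing care is Scott-continuity of \(\pi\). Reducing it to Scott-openness of \(U\), \(V\) and \(U\cap V\) via the principal filters of \(\Dia\) handles it cleanly.
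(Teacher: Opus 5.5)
Your proof is correct and follows the paper's argument essentially verbatim. You use the same section \(e\), the same retraction \(\pi\) defined by membership in \(U\) and \(V\), Scott-continuity via preimages of the upper sets of \(\Dia\), and the valuation retract from Lemma~\ref{lem:valuation-preserves-retracts}. The only cosmetic difference is in checking \(\bot\notin U\cup V\): you read it off the explicit choice \(U=P\setminus\downarrow y\), \(V=P\setminus\downarrow x\), whereas the paper argues that an upper set containing \(\bot\) would be all of \(P\), which works for any separating upper sets.
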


\begin{proof}
Let \((x,y,t)\) be a lower fork, and choose Scott-open sets \(U,V\) as in Lemma~\ref{lem:fork-separation}.  Define \(e:\Dia\to P\) by
\[
        e(0)=\bot,
        \qquad
        e(a)=x,
        \qquad
        e(b)=y,
        \qquad
        e(1)=t .
\]
This map is monotone, hence Scott-continuous because \(\Dia\) is finite.

Define \(\pi:P\to\Dia\) by
\[
\pi(s)=
\begin{cases}
0, & s\notin U\cup V,\\
a, & s\in U\setminus V,\\
b, & s\in V\setminus U,\\
1, & s\in U\cap V.
\end{cases}
\]
The nontrivial Scott-open upper sets of \(\Dia\) are
\[
        \{1\},\quad \{a,1\},\quad \{b,1\},\quad \{a,b,1\},
\]
and their inverse images under \(\pi\) are respectively
\[
        U\cap V,
        \quad
        U,
        \quad
        V,
        \quad
        U\cup V,
\]
which are Scott-open.  Hence \(\pi\) is Scott-continuous.  Also \(\bot\notin U\cup V\): if \(\bot\in U\) or \(\bot\in V\), then, since \(U\) and \(V\) are upper sets, the corresponding open would be all of \(P\), contradicting \(y\notin U\) or \(x\notin V\).  Together with the separation relations for \(x,y,t\), this gives \(\pi e=\id_\Dia\).  Therefore \(\retpair{\Dia}{e}{\pi}{P}\).  The valuation retract follows from Lemma~\ref{lem:valuation-preserves-retracts}.
\end{proof}

\begin{corollary}\label{cor:vone-fork-obstruction}
Let \(P\) be a pointed dcpo.  If \(P\) contains a lower fork, then \(\Vone(P)\) is not an RB-domain.
\end{corollary}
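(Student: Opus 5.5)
The corollary follows by combining the diamond retract with the finite obstruction and the retract-closure of RB-domains.

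Let \(P\) be pointed with a lower fork \((x,y,t)\). Proposition~\ref{prop:diamond-retract} gives a Scott-continuous retraction \(\retpair{\Dia}{e}{\pi}{P}\). Applying Lemma~\ref{lem:valuation-preserves-retracts} with \(\W=\Vone\) then yields
\[
        \retpair{\Vone(\Dia)}{e_*}{\pi_*}{\Vone(P)} .
\]
So \(\Vone(\Dia)\) is a Scott-continuous retract of \(\Vone(P)\).

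Now argue by contradiction. Suppose \(\Vone(P)\) were an RB-domain. Lemma~\ref{lem:rb-retract-closure} would then make \(\Vone(\Dia)\) an RB-domain as well. This contradicts Proposition~\ref{prop:finite-diamond}, so \(\Vone(P)\) is not RB.

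No step here is difficult. The only point to check is that Lemma~\ref{lem:rb-retract-closure} places no hypotheses on \(E\) and \(D\) beyond their being dcpos. This holds because \(\Vone(\Dia)\) and \(\Vone(P)\) are dcpos, with directed suprema computed pointwise on Scott-open sets. All the substantive work lies in Proposition~\ref{prop:finite-diamond}, which is imported from the finite classification.
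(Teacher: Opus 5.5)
Your proposal is correct and matches the paper's proof: Proposition~\ref{prop:diamond-retract} gives the diamond retract, Lemma~\ref{lem:valuation-preserves-retracts} lifts it to $\Vone$, and Lemma~\ref{lem:rb-retract-closure} combined with Proposition~\ref{prop:finite-diamond} rules out RB. Your extra remark that the retract-closure lemma needs only dcpo structure on $E$ and $D$ is accurate, though the paper does not spell it out.
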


\begin{proof}
By Proposition~\ref{prop:diamond-retract} and Lemma~\ref{lem:valuation-preserves-retracts}, \(\Vone(\Dia)\) is a Scott-continuous retract of \(\Vone(P)\).  Since \(\Vone(\Dia)\) is not RB by Proposition~\ref{prop:finite-diamond}, Lemma~\ref{lem:rb-retract-closure} implies that \(\Vone(P)\) is not RB.
\end{proof}

\subsection{The tree case: bounded completeness}\label{subsec:tree-bc}

We now prove the positive direction in the stronger form needed for the main theorem: if the lower fork is absent, then the corresponding valuation domains are bounded-complete.  By Lemma~\ref{lem:no-fork-continuous}, this hypothesis already makes \(P\) continuous, so the standard continuity theorem for valuation domains applies.

\begin{lemma}\label{lem:laminar-basic-opens}
Let \(P\) be a dcpo with no lower fork.  For \(b,c\in P\),
\[
        b\leq c\quad\Longrightarrow\quad \wayup{c}\subseteq \wayup{b},
\]
and
\[
        b\parallel c\quad\Longrightarrow\quad \wayup{b}\cap \wayup{c}=\varnothing.
\]
Consequently every finite family \(\{\wayup{b}:b\in F\}\) is laminar: any two members are nested or disjoint.
\end{lemma}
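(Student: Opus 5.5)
The first implication is general and needs only transitivity of the way-below relation with the order. If \(b\leq c\) and \(s\in\wayup{c}\), then \(c\ll s\), and \(b\leq c\ll s\) gives \(b\ll s\). The standard rule \(u\leq v\ll w\leq z\Rightarrow u\ll z\) follows directly from the definition of \(\ll\): any directed \(D\) with \(s\leq\sup D\) has some \(d\) with \(c\leq d\), hence \(b\leq d\). So \(s\in\wayup{b}\) and \(\wayup{c}\subseteq\wayup{b}\). No use of the tree hypothesis is needed here.

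For the disjointness, I will argue by contradiction using the absence of lower forks. Suppose \(b\parallel c\) and some \(s\) lies in \(\wayup{b}\cap\wayup{c}\). Then \(b\ll s\) and \(c\ll s\), and since way-below implies below, \(b\leq s\) and \(c\leq s\). Thus \((b,c,s)\) is a lower fork, contradicting the hypothesis on \(P\). Hence \(\wayup{b}\cap\wayup{c}=\varnothing\). This is the only place where the hypothesis enters, and it enters directly through the definition of a lower fork. Note that we do not even need Lemma~\ref{lem:no-fork-continuous} here, only that \(u\ll x\) implies \(u\leq x\).

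For the laminarity consequence, take any two members \(\wayup{b}\) and \(\wayup{c}\) of the family. Either \(b\) and \(c\) are comparable or \(b\parallel c\). If \(b\leq c\) or \(c\leq b\), the first implication shows that the two sets are nested. Otherwise the second implication shows that they are disjoint. Finiteness of \(F\) plays no role; it is mentioned only because of how the lemma will be used later. I expect no real obstacle here. The only point requiring care is to state explicitly the transitivity fact \(b\leq c\ll s\Rightarrow b\ll s\), which holds in any dcpo.
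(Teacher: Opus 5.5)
Your proof is correct and matches the paper's argument: the nesting comes from $b\leq c\ll s\Rightarrow b\ll s$, disjointness comes from the lower fork $(b,c,s)$, and laminarity follows by splitting into the comparable and incomparable cases. The only difference is that the paper first cites Lemma~\ref{lem:no-fork-continuous} to make $P$ continuous. As you observe, that citation is superfluous here, since both facts used hold in any dcpo: downward closure of $\ll$ in its first argument, and $u\ll x\Rightarrow u\leq x$.
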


\begin{proof}
By Lemma~\ref{lem:no-fork-continuous}, \(P\) is continuous.  If \(b\leq c\) and \(x\in \wayup{c}\), then \(c\ll x\).  Since the way-below relation is downward closed in its first argument, \(b\ll x\), and hence \(x\in \wayup{b}\).  Thus \(\wayup{c}\subseteq \wayup{b}\).

If \(b\parallel c\) and \(z\in \wayup{b}\cap \wayup{c}\), then \(b\ll z\) and \(c\ll z\), hence \(b\leq z\) and \(c\leq z\).  This gives a lower fork \((b,c,z)\), contradicting the hypothesis.  Therefore \(\wayup{b}\cap \wayup{c}=\varnothing\).  The laminarity assertion follows.
\end{proof}

For a finite subset \(F\subseteq P\) and \(b\in F\), put
\[
        \minF(b)=\Min\{c\in F:b<c\}.
\]
Thus \(\minF(b)\) is the set of minimal elements of \(F\) that are strictly above \(b\).

The proof below is modeled on the finite-tree tail-value arguments of
Jung--Tix and Goubault-Larrecq~\cite{JungTix1998,GoubaultLarrecq2012}.  In a tree order, which is continuous by Lemma~\ref{lem:no-fork-continuous}, finite antichain families of basic Scott opens \(\Uparrow b\) replace the finite family of principal filters \(\uparrow t\) used in the finite case.

\begin{lemma}\label{lem:finite-antichain-envelope}
Let \(P\) be a poset with no lower fork, let \(F\subseteq P\) be finite, and let \(h:F\to[0,\infty]\) be any function.  For an upper subset \(S\subseteq F\), define
\[
        \Phi_F(S)=
        \max\left\{
        \sum_{a\in A}h(a):
        A\subseteq S\text{ is an antichain}
        \right\},
\]
where the empty sum is \(0\).  Then \(\Phi_F\) is a valuation on the finite poset \(F\), i.e. it is strict, monotone, and modular on upper subsets of \(F\).
\end{lemma}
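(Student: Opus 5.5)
The plan is to exploit the forest structure that the absence of lower forks imposes on \(F\), and to reduce modularity to a component-by-component identity. Since \(F\) is finite, the maximum defining \(\Phi_F(S)\) is taken over a finite nonempty family (the empty antichain is always allowed), so \(\Phi_F(S)\in[0,\infty]\) is well defined even when \(h\) takes the value \(\infty\). Strictness is then immediate: the only antichain in \(\varnothing\) is empty, so \(\Phi_F(\varnothing)=0\). Monotonicity is equally direct: if \(S\subseteq T\) are upper subsets, every antichain of \(S\) is an antichain of \(T\), so \(\Phi_F(S)\leq\Phi_F(T)\).

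For modularity, I first isolate an additivity principle. Call two subsets \(W_1,W_2\subseteq F\) \emph{separated} if no element of \(W_1\) is comparable with any element of \(W_2\). If \(W=W_1\sqcup W_2\) with \(W_1,W_2\) separated, then a subset \(A\subseteq W\) is an antichain exactly when \(A\cap W_1\) and \(A\cap W_2\) are antichains. Hence \(\Phi_F(W)=\Phi_F(W_1)+\Phi_F(W_2)\), where I apply the defining formula to arbitrary subsets, not only to upper ones. The same holds for finitely many pairwise separated pieces.

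Next I decompose an upper set \(S\subseteq F\) along its minimal elements. For \(r\in\Min S\), put \(C_r=\{s\in F:r\leq s\}\); this set is contained in \(S\) because \(S\) is upper, and \(S=\bigcup_{r\in\Min S}C_r\) by finiteness. Let \(r\neq r'\) be minimal elements of \(S\), so \(r\parallel r'\). Suppose \(u\in C_r\) and \(v\in C_{r'}\) with \(u\leq v\). Then \(r\leq v\) and \(r'\leq v\), so \((r,r',v)\) would be a lower fork, which is impossible; the case \(v\leq u\) is symmetric. Thus the sets \(C_r\) are pairwise disjoint and separated.

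Now take upper sets \(S,T\) and apply this decomposition to \(S\cup T\), with components \(C_r\) for \(r\in\Min(S\cup T)\). The sets \(S\), \(T\) and \(S\cap T\) intersect these components in pairwise separated pieces. By the additivity principle,
\[
\Phi_F(X)=\sum_{r\in\Min(S\cup T)}\Phi_F(X\cap C_r)\qquad\text{for }X\in\{S,T,S\cup T,S\cap T\}.
\]
It therefore suffices to verify modularity on each component \(C=C_r\). The root \(r\) lies in \(S\) or in \(T\); by symmetry, say \(r\in S\). Then \(C\subseteq S\), so \(C\cap S=C=C\cap(S\cup T)\) and \(C\cap(S\cap T)=C\cap T\). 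Consequently
\[
\Phi_F(C\cap S)+\Phi_F(C\cap T)=\Phi_F(C\cap(S\cup T))+\Phi_F(C\cap S\cap T)
\]
holds trivially. Summing over the components yields \(\Phi_F(S)+\Phi_F(T)=\Phi_F(S\cup T)+\Phi_F(S\cap T)\), with all sums taken in \([0,\infty]\), where addition is harmless.

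The only genuinely substantive step is the separation of the components \(C_r\), which is exactly where the no-lower-fork hypothesis enters. The rest is bookkeeping, including the care needed for possibly infinite values of \(h\).
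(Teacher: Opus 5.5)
Your proof is correct, and it takes a genuinely different and shorter route than the paper. The paper inducts on $|F|$. It splits $F$ into the rooted cones over the minimal elements of $F$. Inside a rooted $F$ with least element $r$, it splits $F\setminus\{r\}$ into branches $F_i$ over the minimal elements strictly above $r$. This gives the recursion $\Phi_F(F)=\max\bigl(h(r),\sum_i\Phi_{F_i}(F_i)\bigr)$, and $\Phi_F(S)=\sum_i\Phi_{F_i}(S\cap F_i)$ when $r\notin S$. Modularity then comes from the tautology ``$r\in S$ forces $S=F$'' at the root, together with the induction hypothesis on the branches.

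You instead decompose $S\cup T$ along its own minimal elements, a decomposition adapted to the pair $(S,T)$. Every cone $C_r$ then lies entirely inside $S$ or inside $T$, so the same tautology settles each component directly, with no induction. Both proofs use the hypothesis the same way: cones over distinct incomparable elements are mutually incomparable, otherwise a lower fork appears. Both also rely on additivity of $\Phi$ over separated pieces.

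Your version shows the structural reason more clearly. In a finite forest, two upper sets are nested on each component of their union, so modularity needs nothing about $\Phi_F$ beyond strictness and additivity over separated pieces. Your monotonicity argument is also direct rather than read off a formula. The paper's recursion, in exchange, gives an explicit tail-value description of $\Phi_F$, in the Jung--Tix style it cites as its model.
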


\begin{proof}
The maximum exists because \(F\) has only finitely many antichains.  We argue by induction on \(|F|\).

First suppose that \(F\) has a least element \(r\).  Let \(c_1,\ldots,c_k\) be the elements of \(\minF(r)\), and let
\[
        F_i=\{d\in F:c_i\leq d\}\qquad(1\leq i\leq k).
\]
Every element of \(F\setminus\{r\}\) belongs to exactly one \(F_i\).  For existence, if \(d>r\), take a minimal element of the finite nonempty set \(\{c\in F:r<c\leq d\}\).  For uniqueness, first note that distinct elements \(c_i,c_j\) are incomparable: if, say, \(c_i<c_j\), then \(c_i\) would contradict the minimality of \(c_j\) above \(r\).  If \(c_i,c_j\leq d\) with \(i\neq j\), then \((c_i,c_j,d)\) is a lower fork, contrary to the hypothesis.  Thus the sets \(F_i\) are pairwise disjoint.  Moreover, if \(d\in F_i\) and \(e\in F_j\) with \(i\neq j\), then \(d\) and \(e\) are incomparable; for instance, \(d\leq e\) would imply \(c_i\leq e\) and \(c_j\leq e\), again producing a lower fork.

For an upper subset \(S\subseteq F\), either \(r\in S\), in which case \(S=F\), or \(r\notin S\), in which case
\[
        S=\bigsqcup_{i=1}^k S_i,
        \qquad S_i=S\cap F_i,
\]
and each \(S_i\) is an upper subset of \(F_i\).  Antichains in \(F\) have the following form.  An antichain containing \(r\) is exactly \(\{r\}\); an antichain not containing \(r\) is the disjoint union of antichains in the components \(F_i\), and any such componentwise union is an antichain because different components are incomparable.  Hence, writing
\[
        Q_r=\max\left(h(r),\sum_{i=1}^k \Phi_{F_i}(F_i)\right),
\]
we have
\[
        \Phi_F(S)=
        \begin{cases}
        Q_r, & r\in S,\\
        \sum_{i=1}^k \Phi_{F_i}(S_i), & r\notin S.
        \end{cases}
\]
By the induction hypothesis, each \(\Phi_{F_i}\) is a valuation.  Strictness is immediate from the displayed formula.  Monotonicity follows componentwise when \(r\notin S\), and if \(S\subseteq F\) does not contain \(r\), then \(\Phi_F(S)\leq\sum_i\Phi_{F_i}(F_i)\leq Q_r=\Phi_F(F)\).

It remains only to spell out modularity in this rooted case.  Let \(S,T\subseteq F\) be upper subsets.  If neither contains \(r\), then neither \(S\cup T\) nor \(S\cap T\) contains \(r\), and the modularity identity is the sum over \(i\) of
\[
        \Phi_{F_i}(S_i)+\Phi_{F_i}(T_i)
        =\Phi_{F_i}(S_i\cup T_i)+\Phi_{F_i}(S_i\cap T_i).
\]
If one of \(S,T\) contains \(r\), say \(S\), then \(S=F\), so \(S\cup T=F\) and \(S\cap T=T\), and the identity becomes \(\Phi_F(F)+\Phi_F(T)=\Phi_F(F)+\Phi_F(T)\).  The case where both contain \(r\) is the same tautology.  Thus \(\Phi_F\) is a valuation when \(F\) has a least element.

For a general finite \(F\), let \(m_1,\ldots,m_\ell\) be the minimal elements of \(F\), and put \(G_j=\{d\in F:m_j\leq d\}\).  Every element of \(F\) lies in exactly one \(G_j\): existence follows by choosing a minimal element below it, and uniqueness follows because two distinct minimal elements below the same element would form a lower fork.  If \(d\in G_i\), \(e\in G_j\), and \(i\neq j\), then \(d\) and \(e\) are incomparable; otherwise one of them would be a common upper bound of \(m_i\) and \(m_j\).  Hence upper subsets and antichains decompose componentwise across the \(G_j\)'s.  Each \(G_j\) has least element \(m_j\), so the rooted case applies to it, and
\[
        \Phi_F(S)=\sum_{j=1}^\ell \Phi_{G_j}(S\cap G_j)
\]
for every upper subset \(S\subseteq F\).  Therefore \(\Phi_F\) is a finite sum of valuations and is itself a valuation.
\end{proof}

\begin{lemma}\label{lem:antichain-envelope}
Let \(P\) be a nonempty dcpo with no lower fork, and let \(p:P\to[0,\infty]\) be any function.  For \(U\in\sigma(P)\), put
\[
        J_p(U)=
        \sup\left\{
        \sum_{b\in A}p(b):
        A\subseteq U\text{ is a finite antichain}
        \right\}.
\]
Then \(J_p:\sigma(P)\to[0,\infty]\) is a continuous valuation.
\end{lemma}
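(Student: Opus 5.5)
The plan is to reduce everything to the finite envelope of Lemma~\ref{lem:finite-antichain-envelope} and then pass to directed suprema. For a finite set \(F\subseteq P\) and a Scott-open \(U\), the set \(F\cap U\) is an upper subset of \(F\) (because \(U\) is an upper set). Moreover, by definition
\[
J_p(U)=\sup_{F\text{ finite}}\Phi_F(F\cap U),
\]
where \(\Phi_F\) is built from \(h=p|_F\). This holds because every finite antichain \(A\subseteq U\) is an antichain inside \(F\cap U\) for \(F=A\). Conversely, any antichain of \(F\cap U\) is a finite antichain in \(U\). The family of finite subsets is directed under inclusion, and \(F\subseteq G\) implies \(\Phi_F(F\cap U)\leq\Phi_G(G\cap U)\), since antichains of \(F\cap U\) are antichains of \(G\cap U\). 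So \(J_p(U)\) is a directed supremum of the values \(\Phi_F(F\cap U)\).

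Strictness is immediate, since \(J_p(\varnothing)\) is the empty sum \(0\). Monotonicity is also clear, since antichains in \(U\subseteq U'\) are antichains in \(U'\). For modularity, fix Scott-open sets \(U,W\). For each finite \(F\), Lemma~\ref{lem:finite-antichain-envelope} applied to the upper subsets \(F\cap U\) and \(F\cap W\) of \(F\) gives
\[
\Phi_F(F\cap U)+\Phi_F(F\cap W)=\Phi_F(F\cap(U\cup W))+\Phi_F(F\cap U\cap W).
\]
Taking the directed supremum over \(F\) on both sides yields modularity. This uses that suprema of directed families of sums in \([0,\infty]\) are sums of suprema: for two monotone nets indexed by the same directed set, \(\sup(a_F+b_F)=\sup a_F+\sup b_F\). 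Each of the four terms is monotone in \(F\), as shown above, so the step is legitimate even when values are infinite.

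Scott continuity is the remaining point. Let \((U_i)\) be a directed family of Scott-open sets with union \(U\). Any finite antichain \(A\subseteq U\) is contained in a single \(U_i\), because \(A\) is finite and the family is directed. Hence every candidate sum for \(J_p(U)\) is already a candidate sum for some \(J_p(U_i)\), which gives \(J_p(U)\leq\sup_iJ_p(U_i)\). The reverse inequality is monotonicity.

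I expect no real obstacle beyond the bookkeeping that \(F\cap U\) is an upper subset of \(F\), so that the finite lemma applies, and the interchange of directed suprema with addition in \([0,\infty]\). The combinatorial heart, modularity on a finite forest, is entirely supplied by Lemma~\ref{lem:finite-antichain-envelope}, whose hypothesis, absence of a lower fork, is inherited by \(F\). Continuity of \(P\) is not even needed here.
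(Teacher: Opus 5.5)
Your proposal is correct and follows essentially the paper's route: both write $J_p(U)$ as the directed supremum over finite $F\subseteq P$ of $\Phi_F(F\cap U)$ and obtain the valuation axioms from Lemma~\ref{lem:finite-antichain-envelope}. The only difference is one of packaging. You verify modularity and Scott-continuity of $J_p$ directly, by interchanging directed suprema with addition in $[0,\infty]$ and by noting that a finite antichain in $\bigcup_i U_i$ lies in a single $U_i$; the paper instead checks that each $J_F$ is a continuous valuation and cites the fact that directed pointwise suprema of continuous valuations are again continuous valuations.
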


\begin{proof}
For each finite \(F\subseteq P\), apply Lemma~\ref{lem:finite-antichain-envelope} to \(h=p|_F\), and define
\[
        J_F(U)=\Phi_F(F\cap U)
        \qquad(U\in\sigma(P)).
\]
Since \(F\cap U\) is an upper subset of \(F\), the map \(J_F\) is a valuation.  It is Scott-continuous: if \((U_i)_i\) is a directed family of Scott-open sets, then the finite set \(F\cap\bigcup_i U_i\) is already contained in some \(F\cap U_i\).  Thus \(J_F(\bigcup_i U_i)=\sup_i J_F(U_i)\).

If \(F\subseteq G\), then every antichain contained in \(F\cap U\) is also an antichain contained in \(G\cap U\), so \(J_F(U)\leq J_G(U)\).  Hence \((J_F)_F\) is a directed family of continuous valuations.  Its pointwise supremum is exactly \(J_p\), since every finite antichain is contained in some finite \(F\).  Directed suprema of continuous valuations are computed pointwise, so \(J_p\) is a continuous valuation.
\end{proof}

\begin{lemma}\label{lem:basic-open-reconstruction}
Let \(P\) be a dcpo with no lower fork, and let \(\mu\in\Vext(P)\).  For every Scott-open set \(U\subseteq P\),
\[
        \mu(U)=
        \sup\left\{
        \sum_{b\in A}\mu(\wayup{b}):
        A\subseteq U\text{ is a finite antichain}
        \right\}.
\]
\end{lemma}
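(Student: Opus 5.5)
We prove the two inequalities separately; write \(R(U)\) for the right-hand side. The inequality \(R(U)\leq\mu(U)\) comes from laminarity, and the reverse inequality from continuity of \(P\) together with Scott-continuity of \(\mu\).

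For \(R(U)\leq\mu(U)\), fix a finite antichain \(A\subseteq U\). By Lemma~\ref{lem:laminar-basic-opens}, pairwise incomparable elements \(b\in A\) give pairwise disjoint sets \(\wayup{b}\). Since \(b\in U\) and \(U\) is an upper set, \(\wayup{b}\subseteq\uparrow b\subseteq U\). Modularity and strictness of \(\mu\) give additivity on finite disjoint families of Scott-open sets, by induction using \(\mu(W_1\cup W_2)=\mu(W_1)+\mu(W_2)\) when \(W_1\cap W_2=\varnothing\). Monotonicity then yields
\[
        \sum_{b\in A}\mu(\wayup{b})=\mu\Big(\bigcup_{b\in A}\wayup{b}\Big)\leq\mu(U).
\]
This remains valid for extended values in \([0,\infty]\).

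For \(\mu(U)\leq R(U)\), we use that \(P\) is continuous by Lemma~\ref{lem:no-fork-continuous}. Hence \(U=\bigcup_{b\in U}\wayup{b}\), since each \(x\in U\) has some \(b\in U\) with \(b\ll x\). So \(U\) is the directed union of the open sets \(W_F=\bigcup_{b\in F}\wayup{b}\), where \(F\) ranges over finite subsets of \(U\). Scott-continuity of \(\mu\) gives \(\mu(U)=\sup_F\mu(W_F)\), and it suffices to show \(\mu(W_F)\leq R(U)\) for each finite \(F\subseteq U\). Let \(A=\Min F\), which is a finite antichain contained in \(U\). Every \(c\in F\) lies above some \(a\in A\), so \(\wayup{c}\subseteq\wayup{a}\) by Lemma~\ref{lem:laminar-basic-opens}. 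Hence \(W_F=\bigcup_{a\in A}\wayup{a}\), a disjoint union, and \(\mu(W_F)=\sum_{a\in A}\mu(\wayup{a})\leq R(U)\).

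The only delicate point is the step \(U=\bigcup_{b\in U}\wayup{b}\). Here the paper's convention for continuity requires \(x=\sup\{u:u\ll x\}\), and we need to apply this at points \(x\) of \(U\) where possibly \(x\ll x\). This is exactly the basis statement recorded in the preliminaries, so it is available. Everything else is finite additivity on the laminar family.
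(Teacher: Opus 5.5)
Your proof is correct and follows essentially the same route as the paper: continuity of \(P\) gives \(U=\bigcup_{b\in U}\wayup{b}\), Scott-continuity of \(\mu\) reduces to finite unions, passing to \(\Min F\) via Lemma~\ref{lem:laminar-basic-opens} turns each finite union into a disjoint union over an antichain, and the reverse inequality comes from the disjointness of the sets \(\wayup{b}\), \(b\in A\), inside \(U\). The only differences are that you prove the two inequalities in the opposite order, and that your closing remark about points with \(x\ll x\) is unnecessary: the basis property from the preliminaries already covers every \(x\in U\).
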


\begin{proof}
By Lemma~\ref{lem:no-fork-continuous}, \(P\) is continuous.  Hence
\begin{equation}\label{eq:basic-cover}
        U=\bigcup\{\wayup{b}:b\in U\}.
\end{equation}
Indeed, if \(x\in U\), then some \(b\in U\) satisfies \(b\ll x\), and then \(x\in\wayup{b}\subseteq U\).  The finite unions of the basic opens in \eqref{eq:basic-cover} form a directed family with union \(U\).  Hence Scott-continuity of \(\mu\) gives
\[
        \mu(U)=\sup\left\{
        \mu\left(\bigcup_{b\in B}\wayup{b}\right):
        B\subseteq U\text{ finite}
        \right\}.
\]
For a finite \(B\subseteq U\), replace \(B\) by \(\Min(B)\).  The union of the corresponding basic opens is unchanged: if \(b\in B\), choose \(m\in\Min(B)\) with \(m\leq b\); then Lemma~\ref{lem:laminar-basic-opens} gives \(\wayup{b}\subseteq\wayup{m}\), while the reverse inclusion of unions is immediate because \(\Min(B)\subseteq B\).  The set \(\Min(B)\) is an antichain, and the opens indexed by \(\Min(B)\) are pairwise disjoint by the same lemma.  Finite modularity therefore gives
\[
        \mu\left(\bigcup_{b\in B}\wayup{b}\right)
        =\sum_{m\in\Min(B)}\mu(\wayup{m}).
\]
This proves the inequality \(\mu(U)\leq\sup\{\sum_{b\in A}\mu(\wayup{b}):A\subseteq U\text{ finite antichain}\}\).

Conversely, if \(A\subseteq U\) is a finite antichain, then the opens \(\wayup{b}\), \(b\in A\), are pairwise disjoint and contained in \(U\).  Hence
\[
        \sum_{b\in A}\mu(\wayup{b})
        =\mu\left(\bigcup_{b\in A}\wayup{b}\right)
        \leq \mu(U).
\]
Taking the supremum over all such \(A\) gives the reverse inequality.
\end{proof}

\begin{proposition}\label{prop:tree-bounded-complete}
Let \(P\) be a nonempty dcpo with no lower fork.  Then \(\Vsub(P)\) and \(\Vext(P)\) are bounded-complete.  If, in addition, \(P\) has a least element, then \(\Vone(P)\) is bounded-complete.
\end{proposition}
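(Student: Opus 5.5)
Given a nonempty family \(\mathcal M\subseteq\W(P)\) bounded above by some \(\nu\), I plan to write down its supremum explicitly. Put
\[
        p(b)=\sup_{\mu\in\mathcal M}\mu(\wayup{b})\qquad(b\in P),
\]
and define the candidate \(\lambda=J_p\) as in Lemma~\ref{lem:antichain-envelope}. That lemma already tells us \(\lambda\) is a continuous valuation, so the remaining work is to show that \(\lambda\) is an upper bound of \(\mathcal M\), that it is below every upper bound, and that it lies in the right class of valuations.

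First, the upper-bound property. For \(\mu\in\mathcal M\) and a finite antichain \(A\subseteq U\) we have \(\sum_{b\in A}\mu(\wayup{b})\leq\sum_{b\in A}p(b)\). Taking suprema and applying Lemma~\ref{lem:basic-open-reconstruction} gives \(\mu(U)\leq\lambda(U)\), so \(\mu\leq\lambda\).

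Second, minimality. Let \(\nu'\) be any upper bound of \(\mathcal M\). Then \(p(b)\leq\nu'(\wayup{b})\) for every \(b\). Since the sets \(\wayup{b}\), \(b\in A\), are pairwise disjoint and contained in \(U\) whenever \(A\subseteq U\) is a finite antichain (Lemma~\ref{lem:laminar-basic-opens}), modularity gives
\[
        \sum_{b\in A}p(b)\leq\nu'\Bigl(\bigcup_{b\in A}\wayup{b}\Bigr)\leq\nu'(U).
\]
Hence \(\lambda\leq\nu'\), and \(\lambda\) is the least upper bound in \(\Vext(P)\).

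Finally, the class membership. For \(\Vsub(P)\), the inequality \(\lambda\leq\nu\) gives \(\lambda(P)\leq\nu(P)\leq 1\), so \(\lambda\in\Vsub(P)\). Because the order on \(\Vsub(P)\) is the restriction of the order on \(\Vext(P)\), \(\lambda\) is also the supremum there. For \(\Vone(P)\) with \(\bot\) least, I expect the main subtlety to be checking total mass. We have \(\lambda(P)\geq\mu(P)=1\) for any \(\mu\in\mathcal M\), which uses \(\mathcal M\neq\varnothing\), and \(\lambda(P)\leq\nu(P)=1\), so \(\lambda(P)=1\). Alternatively, one can observe that \(\bot\ll\bot\) forces \(\wayup{\bot}=P\), so \(p(\bot)=1\) directly.

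The potential obstacle is confirming that Lemma~\ref{lem:antichain-envelope} applies with an arbitrary function \(p\) taking values in \([0,\infty]\); it is stated in exactly that generality, so no monotonicity of \(p\) is needed. Continuity of the domains, which is needed if bounded-completeness is read as part of the bc-domain property, follows from Lemma~\ref{lem:no-fork-continuous} together with the cited continuity theorem for valuation domains.
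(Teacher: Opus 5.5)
Your proposal is correct and follows essentially the same route as the paper's proof. Both define the candidate supremum as \(J_p\) with \(p(b)=\sup_{\mu}\mu(\wayup{b})\), use Lemma~\ref{lem:basic-open-reconstruction} for the upper-bound property, use disjointness of the \(\wayup{b}\) for minimality in \(\Vext(P)\), and use comparison with the given upper bound to get the mass constraint. Your primary lower bound \(\lambda(P)\geq\mu(P)=1\) in the \(\Vone\) case differs slightly from the paper's \(p(\bot)=1\) via \(\wayup{\bot}=P\), but both are valid and you mention the latter as well.
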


\begin{proof}
By Lemma~\ref{lem:no-fork-continuous}, \(P\) is continuous.  We prove the assertion for a nonempty bounded family \(S\); the empty family has supremum the zero valuation in \(\Vsub(P)\) and \(\Vext(P)\), and, in the pointed normalized case, the Dirac valuation \(\delta_\bot\).

Let \(S\) be a nonempty family of valuations in one of \(\Vsub(P)\), \(\Vext(P)\), or, if \(P\) has a least element, \(\Vone(P)\).  Assume that \(S\) has a common upper bound in that same powerdomain.  For \(b\in P\), set
\[
        p(b)=\sup_{\mu\in S}\mu(\wayup{b}).
\]
Let \(J=J_p\) be the continuous valuation of Lemma~\ref{lem:antichain-envelope}.

We first show that \(J\) is an upper bound of \(S\).  Let \(\mu\in S\) and \(O\in\sigma(P)\).  By Lemma~\ref{lem:basic-open-reconstruction},
\[
        \mu(O)=
        \sup\left\{
        \sum_{b\in A}\mu(\wayup{b}):
        A\subseteq O\text{ finite antichain}
        \right\}
        \leq J(O),
\]
because \(\mu(\wayup{b})\leq p(b)\) for every \(b\).  Thus \(\mu\leq J\) for all \(\mu\in S\).

Now let \(\tau\in\Vext(P)\) be any extended valuation that is a common upper bound of \(S\).  Then \(p(b)\leq \tau(\wayup{b})\) for every \(b\).  If \(A\subseteq O\) is a finite antichain, then the opens \(\wayup{b}\), \(b\in A\), are pairwise disjoint and contained in \(O\).  Hence
\[
        \sum_{b\in A}p(b)
        \leq
        \sum_{b\in A}\tau(\wayup{b})
        =
        \tau\left(\bigcup_{b\in A}\wayup{b}\right)
        \leq \tau(O).
\]
Taking the supremum over such \(A\), we get \(J(O)\leq\tau(O)\) for every Scott-open \(O\), hence \(J\leq\tau\).  Therefore \(J\) is the least upper bound of \(S\) in \(\Vext(P)\) whenever \(S\subseteq\Vext(P)\).

If \(S\subseteq\Vsub(P)\), choose a common upper bound \(\rho_1\in\Vsub(P)\).  The preceding extended-valued minimality gives \(J\leq \rho_1\), hence
\[
        J(P)\leq\rho_1(P)\leq1.
\]
Thus \(J\in\Vsub(P)\).  Moreover, every upper bound of \(S\) in \(\Vsub(P)\) is also an extended upper bound, so the same inequality \(J\leq\tau\) shows that \(J\) is the least upper bound in \(\Vsub(P)\).

If \(S\subseteq\Vone(P)\) and \(P\) has least element \(\bot\), then \(\bot\ll x\) for every \(x\in P\), because directed sets are nonempty.  Hence \(\wayup{\bot}=P\), and
\[
        p(\bot)=\sup_{\mu\in S}\mu(P)=1.
\]
Since \(\{\bot\}\) is a finite antichain contained in \(P\), we have \(J(P)\geq1\).  Choose a common upper bound \(\rho_1\in\Vone(P)\).  The extended-valued minimality gives \(J\leq\rho_1\), hence \(J(P)\leq\rho_1(P)=1\).  Therefore \(J(P)=1\), so \(J\in\Vone(P)\).  Every upper bound of \(S\) in \(\Vone(P)\) is again an extended upper bound, so \(J\) is the least upper bound in \(\Vone(P)\).
\end{proof}

\begin{corollary}\label{cor:tree-bc}
Let \(P\) be a nonempty dcpo with no lower fork.  Then \(\Vsub(P)\) and \(\Vext(P)\) are pointed bc-domains.  If \(P\) has a least element, then \(\Vone(P)\) is a pointed bc-domain.
\end{corollary}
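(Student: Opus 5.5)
The corollary follows by assembling three ingredients: bounded completeness (Proposition~\ref{prop:tree-bounded-complete}), continuity of the valuation domains, and existence of a least element. For each of the three powerdomains I would check these in turn.

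\emph{Continuity.} By Lemma~\ref{lem:no-fork-continuous}, \(P\) is a continuous dcpo. The standard continuity theorem recalled in the preliminaries then applies: \(\Vsub(P)\) and \(\Vext(P)\) are continuous dcpos, and \(\Vone(P)\) is continuous when \(P\) is pointed.

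\emph{Bounded completeness.} Proposition~\ref{prop:tree-bounded-complete} gives suprema of nonempty bounded families in \(\Vsub(P)\) and \(\Vext(P)\), and also in \(\Vone(P)\) when \(P\) has a least element. Together with continuity, this makes each of these a bc-domain in the sense used here, where only nonempty bounded subsets are required to have suprema.

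\emph{Pointedness.} The zero valuation is strict, monotone, modular and Scott-continuous, and it lies below every valuation in the stochastic order. So it is the least element of both \(\Vsub(P)\) and \(\Vext(P)\). In the normalized case with \(\bot\in P\), Proposition~\ref{prop:vone-least-root} shows that \(\delta_\bot\) is least in \(\Vone(P)\).

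The only point needing care is the continuity step. It is imported as a standard fact, and for \(\Vone\) it is used only in the pointed case, which is exactly the hypothesis available here. Once that is granted, the rest is bookkeeping, and I expect no real obstacle.
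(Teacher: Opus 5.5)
Your proposal is correct and follows essentially the same route as the paper. The paper likewise gets continuity from Lemma~\ref{lem:no-fork-continuous} plus the imported continuity theorem (used for \(\Vone\) only in the pointed case), bounded completeness from Proposition~\ref{prop:tree-bounded-complete}, and takes the zero valuation, resp.\ \(\delta_\bot\) (as in Proposition~\ref{prop:vone-least-root}), as the least element.
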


\begin{proof}
By Lemma~\ref{lem:no-fork-continuous}, \(P\) is continuous.  Hence the standard continuity theorem recalled in Section~\ref{sec:preliminaries} gives that \(\Vsub(P)\) and \(\Vext(P)\) are continuous, and \(\Vone(P)\) is continuous in the pointed case.  Proposition~\ref{prop:tree-bounded-complete} gives bounded completeness in the stated cases.  The zero valuation is the least element of \(\Vsub(P)\) and \(\Vext(P)\); when \(P\) has least element \(\bot\), the Dirac valuation \(\delta_\bot\) is the least element of \(\Vone(P)\).  Hence the corresponding valuation dcpos are pointed bc-domains.
\end{proof}

\begin{theorem}\label{thm:vone-classification}
Let \(P\) be a nonempty dcpo.  Then
\[
\begin{aligned}
\Vone(P)\text{ is an RB-domain}
&\Longleftrightarrow
\Vone(P)\text{ is a pointed bc-domain}\\
&\Longleftrightarrow
P\text{ has a least element and contains no lower fork}.
\end{aligned}
\]
Equivalently, \(\Vone(P)\) is RB iff it is a pointed bc-domain iff \(P\) has a least element and every principal ideal of the form \(\downarrow t\) is a chain.
\end{theorem}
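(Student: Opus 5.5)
The plan is to prove the theorem as a cycle of three implications, each assembled from results already established.

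First, assume \(\Vone(P)\) is an RB-domain. By Proposition~\ref{prop:no-root-vone}, \(P\) must have a least element \(\bot\), because otherwise \(\Vone(P)\) has no finite-image deflation at all. With \(P\) now pointed, Corollary~\ref{cor:vone-fork-obstruction} applies: if \(P\) had a lower fork, \(\Vone(\Dia)\) would be a Scott-continuous retract of \(\Vone(P)\), and by Proposition~\ref{prop:finite-diamond} and Lemma~\ref{lem:rb-retract-closure} this would contradict the RB property. So \(P\) is pointed and has no lower fork.

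Second, assume \(P\) has a least element and no lower fork. Then Corollary~\ref{cor:tree-bc} gives directly that \(\Vone(P)\) is a pointed bc-domain. That corollary rests on two ingredients:
\begin{itemize}
\item continuity of \(P\), from Lemma~\ref{lem:no-fork-continuous}, which via the standard continuity theorem makes \(\Vone(P)\) continuous in the pointed case;
\item bounded completeness from Proposition~\ref{prop:tree-bounded-complete}, with least element \(\delta_\bot\) supplied by Proposition~\ref{prop:vone-least-root}.
\end{itemize}

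Third, if \(\Vone(P)\) is a pointed bc-domain, Lemma~\ref{lem:pointed-bc-rb} shows it is an RB-domain, which closes the cycle. The reformulation in the final sentence of the statement is the remark from the preliminaries that \(P\) has no lower fork exactly when every principal ideal \(\downarrow t\) is a chain.

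No step is a real obstacle, since the substantive work sits in the cited results. The one point needing care is the order of the first implication. Pointedness must be derived before invoking the diamond retract, because Proposition~\ref{prop:diamond-retract} needs \(\bot\) to define \(e(0)\) and to check that \(\pi(\bot)=0\).
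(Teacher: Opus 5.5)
Your proposal is correct and follows the paper's proof. Both argue the same way: Proposition~\ref{prop:no-root-vone} yields a least element, Corollary~\ref{cor:vone-fork-obstruction} then excludes lower forks, and Corollary~\ref{cor:tree-bc} together with Lemma~\ref{lem:pointed-bc-rb} closes the cycle; your point that pointedness must be derived before invoking the diamond retract matches the order in the paper.
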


\begin{proof}
If \(\Vone(P)\) is RB, then \(P\) has a least element by Proposition~\ref{prop:no-root-vone}.  With this least element, any lower fork would contradict Corollary~\ref{cor:vone-fork-obstruction}.  Thus RB implies the stated tree condition.

If \(P\) has a least element and no lower fork, then Corollary~\ref{cor:tree-bc} gives that \(\Vone(P)\) is a pointed bc-domain.  Finally, every pointed bc-domain is an RB-domain by Lemma~\ref{lem:pointed-bc-rb}.
\end{proof}

\section{Subprobability powerdomain}\label{sec:subprobability}

We next treat \(\Vsub(P)\).  The missing-root obstruction disappears because the zero valuation is always present.  For the finite fork obstruction used below, this is reflected by the bottom-adjunction identification \(\Vsub(\Fork)\cong\Vone(\Dia)\), obtained by placing the missing mass at a newly adjoined bottom.  Thus only the lower-fork obstruction remains.

\subsection{The fork forbidden retract}

\begin{proposition}\label{prop:vsub-fork-nonrb}
The dcpo \(\Vsub(\Fork)\) is not an RB-domain.
\end{proposition}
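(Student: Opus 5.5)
The plan is to transfer the diamond obstruction of Proposition~\ref{prop:finite-diamond} to \(\Vsub(\Fork)\), using the identification \(\Vsub(\Fork)\cong\Vone(\Dia)\) mentioned at the start of this section. It then suffices to construct an order-isomorphism between these two dcpos. An order-isomorphism preserves directed suprema and so is a Scott-continuous retract in both directions, and Lemma~\ref{lem:rb-retract-closure} then transfers the failure of the RB property.

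\textbf{Construction of the map.} Define \(\Psi:\Vone(\Dia)\to\Vsub(\Fork)\) by restriction: for \(\mu\in\Vone(\Dia)\) and a Scott-open (equivalently, since \(\Fork\) is finite, upper) set \(U\subseteq\Fork\), put \(\Psi(\mu)(U)=\mu(U)\). This is legitimate because every upper subset of \(\Fork\) is also an upper subset of \(\Dia\) not containing \(0\). Conversely, define \(\Phi:\Vsub(\Fork)\to\Vone(\Dia)\) by \(\Phi(\nu)(U)=\nu(U)\) when \(0\notin U\), and \(\Phi(\nu)(\Dia)=1\).

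\textbf{Verification steps.} The checks are as follows.
\begin{enumerate}[label=(\roman*)]
\item \(\Phi(\nu)\) is strict and monotone. Monotonicity uses \(\nu(\Fork)\leq1\).
\item \(\Phi(\nu)\) is modular. The only new cases involve \(\Dia\) itself, and \(\Dia\cup W=\Dia\), \(\Dia\cap W=W\), so modularity is tautological there.
\item Scott-continuity is automatic on the finite lattice of opens.
\item \(\Psi(\mu)\) is a valuation of mass \(\mu(\Fork)\leq1\).
\item \(\Phi\) and \(\Psi\) are mutually inverse. This holds since the only open of \(\Dia\) containing \(0\) is \(\Dia\), on which every probability valuation takes value \(1\).
\item Both maps are monotone for the stochastic order, since they compare the same numbers on opens avoiding \(0\) and agree on \(\Dia\).
\end{enumerate}
Hence \(\Phi\) and \(\Psi\) form an order-isomorphism, and in particular both are Scott-continuous.

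\textbf{Conclusion and main obstacle.} Conclude with Lemma~\ref{lem:rb-retract-closure} applied to \(\retpair{\Vone(\Dia)}{\Phi}{\Psi}{\Vsub(\Fork)}\). If \(\Vsub(\Fork)\) were RB, then \(\Vone(\Dia)\) would be RB, contradicting Proposition~\ref{prop:finite-diamond}. The only point requiring care is the bookkeeping that the upper sets of \(\Dia\) are exactly the upper sets of \(\Fork\) together with \(\Dia\). This is the step where the bottom-adjunction identification really happens: the missing mass \(1-\nu(\Fork)\) is placed at \(0\). I do not expect a genuine obstacle.
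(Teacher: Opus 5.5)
Your proposal is correct and is essentially the paper's proof: the paper's map $\Theta(\mu)=\mu+(1-\mu(\Fork))\delta_0$ is exactly your $\Phi$, its inverse is restriction, and both are checked to be order-preserving on upper sets, giving $\Vsub(\Fork)\cong\Vone(\Dia)$ and hence the claim via Proposition~\ref{prop:finite-diamond}. One cosmetic slip: under the convention $\retpair{E}{e}{p}{D}$ with section $e\colon E\to D$, the retract should read $\retpair{\Vone(\Dia)}{\Psi}{\Phi}{\Vsub(\Fork)}$, since $\Psi$ is your map out of $\Vone(\Dia)$; because $\Phi$ and $\Psi$ are mutually inverse, this does not affect the argument.
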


\begin{proof}
The diamond \(\Dia\) is obtained from \(\Fork\) by adjoining a fresh least element \(0\).  In point-mass coordinates, define
\[
        \Theta(\mu)=\mu+(1-\mu(\Fork))\delta_0
        \qquad(\mu\in\Vsub(\Fork)).
\]
Then \(\Theta(\mu)\in\Vone(\Dia)\).  If an upper set \(W\subseteq\Dia\) does not contain \(0\), then \(W\subseteq\Fork\) is an upper set of \(\Fork\), and \(\Theta(\mu)(W)=\mu(W)\).  If \(0\in W\), then \(W=\Dia\), and both probability valuations assign value \(1\) to \(W\).  Hence \(\Theta\) is order preserving.  Its inverse is the restriction of point masses from \(\Dia\) to \(\Fork\); the same upper-set check shows that this inverse is order preserving.  Thus \(\Vsub(\Fork)\cong\Vone(\Dia)\) as ordered dcpos.  By Proposition~\ref{prop:finite-diamond}, \(\Vone(\Dia)\) is not RB, and therefore \(\Vsub(\Fork)\) is not RB.
\end{proof}

\begin{proposition}\label{prop:vsub-fork-retract}
Let \(P\) be a nonempty dcpo.  If \(P\) contains a lower fork, then
\[
        \retpair{\Vsub(\Fork)}{E}{R}{\Vsub(P)}.
\]
Consequently, \(\Vsub(P)\) is not an RB-domain.
\end{proposition}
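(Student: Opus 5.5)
Given a lower fork \((x,y,t)\) in \(P\), we cannot retract \(P\) onto \(\Fork\) at the level of points: \(P\) need not be pointed, and elements outside \(U\cup V\) have nowhere to go in \(\Fork\). We therefore build the retraction directly at the level of subprobability valuations, discarding the mass that falls outside \(U\cup V\). This is exactly what the identification \(\Vsub(\Fork)\cong\Vone(\Dia)\) suggests, with \(\delta_0\) playing the role of lost mass.

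\textbf{Step 1 (the maps).} Choose Scott-open upper sets \(U,V\) as in Lemma~\ref{lem:fork-separation}. Let \(e:\Fork\to P\) be the monotone map \(e(a)=x\), \(e(b)=y\), \(e(1)=t\); it is Scott-continuous because \(\Fork\) is finite. Put \(E=e_*\), the pushforward, which maps \(\Vsub(\Fork)\) into \(\Vsub(P)\) and is Scott-continuous by the argument of Lemma~\ref{lem:valuation-preserves-retracts}. For the retraction, define \(R(\mu)\) for \(\mu\in\Vsub(P)\) on the Scott-open upper sets of \(\Fork\) by
\[
R(\mu)(W)=\mu(O_W),\qquad O_{\{1\}}=U\cap V,\; O_{\{a,1\}}=U,\; O_{\{b,1\}}=V,\; O_{\Fork}=U\cup V,\; O_\varnothing=\varnothing .
\]
Equivalently, \(R(\mu)=\pi_*(\mu)\) restricted to \(\Fork\), where \(\pi:P\to\Dia\) is the map from the proof of Proposition~\ref{prop:diamond-retract}. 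That proof never used \(\bot\) except to check \(\pi e=\id\), so \(\pi\) is Scott-continuous for any \(P\). Then \(R\) is the composite of \(\pi_*:\Vsub(P)\to\Vsub(\Dia)\) with the restriction map \(\Vsub(\Dia)\to\Vsub(\Fork)\), \(\nu\mapsto\nu|_{\sigma(\Fork)}\).

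\textbf{Step 2 (\(R\) is well defined and continuous).} The assignment \(W\mapsto O_W\) preserves unions and intersections: the upper sets of \(\Fork\) form the lattice \(\varnothing,\{1\},\{a,1\},\{b,1\},\Fork\), and \(O\) sends it to the sublattice generated by \(U,V\). Hence \(R(\mu)\) is strict, monotone and modular, and it is automatically Scott-continuous on the finite lattice. Its total mass is \(\mu(U\cup V)\le1\), so \(R(\mu)\in\Vsub(\Fork)\). The map \(R\) is monotone, and it preserves directed suprema because suprema in \(\Vsub\) are computed pointwise on opens.

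\textbf{Step 3 (\(R\circ E=\id\)).} For \(\nu\in\Vsub(\Fork)\) we have \(R(E\nu)(W)=\nu(e^{-1}(O_W))\). By the separation relations \(x\in U\setminus V\), \(y\in V\setminus U\), \(t\in U\cap V\), we get \(e^{-1}(O_W)=W\) in each of the five cases, so \(R(E\nu)=\nu\).

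\textbf{Conclusion and main obstacle.} The non-RB conclusion then follows from Proposition~\ref{prop:vsub-fork-nonrb} and Lemma~\ref{lem:rb-retract-closure}. The only delicate point is Step 2: \(R\) is not a pushforward along a map \(P\to\Fork\), so its continuity and modularity must be justified via the lattice homomorphism \(W\mapsto O_W\). Factoring \(R\) through \(\pi_*\) followed by restriction, which preserves modularity and directed suprema, is the cleanest way to do this.
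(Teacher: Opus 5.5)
Your proposal is correct and follows the paper's proof essentially verbatim: you use the same section $E=e_*$, the same retraction $R$ defined via $U\cap V$, $U$, $V$, $U\cup V$, and the same separation check giving $R\circ E=\id$. The only difference is cosmetic. The paper verifies modularity of $R(\nu)$ directly on the single incomparable pair $\{a,1\},\{b,1\}$, whereas you obtain it by factoring $R$ as $\pi_*$ followed by restriction from $\Vsub(\Dia)$ to $\Vsub(\Fork)$; this is valid because $\pi$ is Scott-continuous without any pointedness assumption on $P$.
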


\begin{proof}
Let \((x,y,t)\) be a lower fork of \(P\), and choose Scott-open sets \(U,V\) as in
Lemma~\ref{lem:fork-separation}.  Define
\[
        e:\Fork\to P,
        \qquad
        e(a)=x,
        \quad
        e(b)=y,
        \quad
        e(1)=t .
\]
This map is monotone and hence Scott-continuous.  Let \(E=e_*\) be the pushforward map induced by \(e\), i.e., 
\[
        E:\Vsub(\Fork)\to\Vsub(P),
        \qquad
        E(\eta)(O)=\eta(e^{-1}(O))
        \quad(O\in\sigma(P)).
\]
  
Define \(R:\Vsub(P)\to\Vsub(\Fork)\) by specifying its values on the
Scott-open upper sets of \(\Fork\):
\[
\begin{array}{rcl}
R(\nu)(\varnothing)&=&0,\\
R(\nu)(\{1\})&=&\nu(U\cap V),\\
R(\nu)(\{a,1\})&=&\nu(U),\\
R(\nu)(\{b,1\})&=&\nu(V),\\
R(\nu)(\Fork)&=&\nu(U\cup V).
\end{array}
\]
We first check that \(R(\nu)\) is a subprobability valuation on \(\Fork\).
Strictness is clear.  Monotonicity follows from the inclusions
\[
        U\cap V\subseteq U\subseteq U\cup V,
        \qquad
        U\cap V\subseteq V\subseteq U\cup V.
\]
Also
\[
        R(\nu)(\Fork)=\nu(U\cup V)\leq \nu(P)\leq 1,
\]
so \(R(\nu)\) has total mass at most \(1\).

It remains to check modularity.  The Scott-open upper sets of \(\Fork\) are
\[
        \varnothing,\quad
        \{1\},\quad
        \{a,1\},\quad
        \{b,1\},\quad
        \Fork .
\]
If one of the two opens is contained in the other, modularity is immediate.
Thus the only nontrivial incomparable pair is
\[
        \{a,1\},\qquad \{b,1\}.
\]
For this pair, modularity says
\[
        R(\nu)(\{a,1\})+R(\nu)(\{b,1\})
        =
        R(\nu)(\Fork)+R(\nu)(\{1\}).
\]
By the definition of \(R\), this is exactly
\[
        \nu(U)+\nu(V)
        =
        \nu(U\cup V)+\nu(U\cap V),
\]
which holds because \(\nu\) is a valuation.  Hence \(R(\nu)\) is a continuous
subprobability valuation on the finite dcpo \(\Fork\).  Continuity on
directed unions is automatic on the finite space \(\Fork\), since its lattice
of Scott-open sets is finite.

We next show that \(R\) is Scott-continuous.  Let \((\nu_i)_i\) be a directed
family in \(\Vsub(P)\), and put \(\nu=\sup_i\nu_i\).  Directed suprema of
continuous valuations are computed pointwise on Scott-open sets.  Therefore,
for each Scott-open \(W\subseteq\Fork\), we have
\[
        R(\nu)(W)=\sup_i R(\nu_i)(W).
\]
Indeed, this is immediate from the above definition, since \(W\) corresponds
respectively to one of the Scott-open sets
\[
        \varnothing,\quad U\cap V,\quad U,\quad V,\quad U\cup V
\]
of \(P\).  Hence
\[
        R\left(\sup_i\nu_i\right)=\sup_i R(\nu_i),
\]
so \(R\) is Scott-continuous.

The separation properties give
\[
\begin{array}{rcl}
 e^{-1}(U\cap V)&=&\{1\},\\
 e^{-1}(U)&=&\{a,1\},\\
 e^{-1}(V)&=&\{b,1\},\\
 e^{-1}(U\cup V)&=&\Fork .
\end{array}
\]
Therefore, for every \(\eta\in\Vsub(\Fork)\),
\[
        R(E(\eta))=R(e_*(\eta))=\eta,
\]
because the two valuations agree on all Scott-open upper sets of \(\Fork\).
Thus
\[
        R E=\id_{\Vsub(\Fork)}.
\]
Since \(\Vsub(\Fork)\) is not RB by Proposition~\ref{prop:vsub-fork-nonrb},
Lemma~\ref{lem:rb-retract-closure} implies that \(\Vsub(P)\) is not RB.
\end{proof}

\begin{theorem}\label{thm:vsub-classification}
Let \(P\) be a nonempty dcpo.  Then
\[
        \Vsub(P)\text{ is an RB-domain}
        \quad\Longleftrightarrow\quad
        \Vsub(P)\text{ is a pointed bc-domain}
        \quad\Longleftrightarrow\quad
        P\text{ contains no lower fork}.
\]
Equivalently, \(\Vsub(P)\) is RB iff it is a pointed bc-domain iff every principal ideal of the form \(\downarrow t\) is a chain.
\end{theorem}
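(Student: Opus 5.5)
The plan is to close the cycle of implications exactly as in the proof of Theorem~\ref{thm:vone-classification}, since all ingredients for \(\Vsub\) are already in place.

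\emph{RB implies no lower fork.} Suppose \(\Vsub(P)\) is an RB-domain. If \(P\) contained a lower fork \((x,y,t)\), then Proposition~\ref{prop:vsub-fork-retract} would make \(\Vsub(\Fork)\) a Scott-continuous retract of \(\Vsub(P)\). Lemma~\ref{lem:rb-retract-closure} would then make \(\Vsub(\Fork)\) RB, contradicting Proposition~\ref{prop:vsub-fork-nonrb}. Hence \(P\) has no lower fork. Unlike the normalized case, no root obstruction is needed here. Indeed, the retraction \(R\) in Proposition~\ref{prop:vsub-fork-retract} uses only the separating opens \(U,V\) of Lemma~\ref{lem:fork-separation} and never a least element of \(P\), because the missing mass is simply discarded.

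\emph{No lower fork implies pointed bc-domain.} If \(P\) has no lower fork, Corollary~\ref{cor:tree-bc} gives directly that \(\Vsub(P)\) is a pointed bc-domain. In detail, the corollary combines three facts. First, \(P\) is continuous by Lemma~\ref{lem:no-fork-continuous}, so \(\Vsub(P)\) is continuous by the standard theorem. Second, \(\Vsub(P)\) is bounded-complete by Proposition~\ref{prop:tree-bounded-complete}, with the supremum given by the antichain envelope \(J_p\). Third, the zero valuation is the least element.

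\emph{Pointed bc-domain implies RB.} This is Lemma~\ref{lem:pointed-bc-rb}.

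The reformulation in terms of principal ideals is just the definition: ``no lower fork'' is equivalent to every \(\downarrow t\) being a chain. There is no real obstacle at this stage; the substantive work was done in the earlier results. The only point to check is that nonemptiness of \(P\) is all that the cited results require. Proposition~\ref{prop:vsub-fork-retract} and Corollary~\ref{cor:tree-bc} are stated for nonempty dcpos without any pointedness assumption, so the hypothesis suffices.
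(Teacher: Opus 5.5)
Your proposal is correct and follows the paper's proof. The obstruction direction uses the retract of Proposition~\ref{prop:vsub-fork-retract} together with Lemma~\ref{lem:rb-retract-closure} and Proposition~\ref{prop:vsub-fork-nonrb}; the positive direction uses Corollary~\ref{cor:tree-bc} and closes the cycle with Lemma~\ref{lem:pointed-bc-rb}, exactly as in the paper.
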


\begin{proof}
If \(\Vsub(P)\) is RB, then \(P\) contains no lower fork by Proposition~\ref{prop:vsub-fork-retract}.  Conversely, if \(P\) contains no lower fork, then Corollary~\ref{cor:tree-bc} gives that \(\Vsub(P)\) is a pointed bc-domain.  Finally, every pointed bc-domain is an RB-domain by Lemma~\ref{lem:pointed-bc-rb}.
\end{proof}

\section{Extended probabilistic powerdomain}\label{sec:extended}

The extended case has the same lower-fork obstruction as the subprobability case.  No mass cutoff is needed for the proof of bounded completeness; arbitrary extended masses are handled by the antichain-envelope formula of Proposition~\ref{prop:tree-bounded-complete}.

\subsection{The fork forbidden retract}

\begin{proposition}\label{prop:vext-fork-nonrb}
The dcpo \(\Vext(\Fork)\) is not an RB-domain.
\end{proposition}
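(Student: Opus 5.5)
The plan is to exhibit \(\Vsub(\Fork)\) as a Scott-continuous retract of \(\Vext(\Fork)\). Since \(\Vsub(\Fork)\) is not RB by Proposition~\ref{prop:vsub-fork-nonrb}, Lemma~\ref{lem:rb-retract-closure} then shows that \(\Vext(\Fork)\) is not RB. The section will be the inclusion \(\iota:\Vsub(\Fork)\hookrightarrow\Vext(\Fork)\). This is Scott-continuous because directed suprema in both dcpos are computed pointwise on Scott-open sets, and a directed supremum of subprobability valuations still has total mass at most \(1\).

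For the retraction I would avoid rescaling maps such as \(\mu\mapsto\mu/\max(1,\mu(\Fork))\). Rescaling is not monotone, since increasing the mass on \(\{a,1\}\) can decrease the rescaled mass on \(\{1\}\). Instead I would use truncation through the open-set description. The opens of \(\Fork\) are \(\varnothing,\{1\},\{a,1\},\{b,1\},\Fork\), so a valuation on \(\Fork\) is determined by the point masses \(m_1=\mu(\{1\})\), \(m_a=\mu(\{a,1\})-m_1\) and \(m_b=\mu(\{b,1\})-m_1\), all in \([0,\infty]\). Using the identification \(\Vsub(\Fork)\cong\Vone(\Dia)\), one might first try a ``saturating'' map that fills mass greedily from the top element \(1\) downward:
\[
        r(\mu)(\{1\})=\min(1,m_1),\qquad
        r(\mu)(\{a,1\})=\min(1,\mu(\{a,1\})),
\]
\[
        r(\mu)(\{b,1\})=\min(1,\mu(\{b,1\})),\qquad
        r(\mu)(\Fork)=\min(1,\mu(\Fork)).
\]
Each entry is a monotone, Scott-continuous function of the values of \(\mu\) on opens, because \(t\mapsto\min(1,t)\) is monotone and continuous on \([0,\infty]\). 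Hence \(r\) is Scott-continuous, and it fixes every subprobability valuation.

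The key check, and the main obstacle, is that \(r(\mu)\) is again a valuation, that is, modularity for the pair \(\{a,1\},\{b,1\}\):
\[
        \min(1,m_1+m_a)+\min(1,m_1+m_b)=\min(1,m_1+m_a+m_b)+\min(1,m_1).
\]
This identity fails in general: with \(m_1=0\) and \(m_a=m_b=1\), the left side is \(2\) and the right side is \(1\). So the naive truncation must be corrected by filling the two branches in a fixed priority. I would define \(r(\mu)\) through point masses \(m'_1=\min(1,m_1)\), \(m'_a=\min(m_a,1-m'_1)\) and \(m'_b=\min(m_b,1-m'_1-m'_a)\), and then set each \(r(\mu)(W)\) to be the sum of the primed masses over the points of \(W\). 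This is automatically a valuation of mass at most \(1\), it fixes \(\Vsub(\Fork)\), and it is continuous in \((m_1,m_a,m_b)\).

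Monotonicity in the stochastic order is the delicate point. Increasing \(m_1\) while lowering \(m_a\) can reduce \(m'_b\), and then \(r(\mu)(\{b,1\})=m'_1+m'_b\) must still not decrease. I would verify this by case analysis on where the cap \(1\) is reached.

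If this verification fails, I would fall back to a different route: \(\Vext(\Fork)\) is a continuous cone in which every element is a directed supremum of simple valuations. If it were RB, restricting the deflations to the principal ideal \(\downarrow 2\delta_1\) would give an RB-domain by Lemma~\ref{lem:principal-ideal-rb}. I would then identify that ideal, up to scaling, with a copy of \(\Vsub(\Fork)\), or show that it has \(\Vsub(\Fork)\) as a retract, which again contradicts Proposition~\ref{prop:vsub-fork-nonrb}.
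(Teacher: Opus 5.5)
Your main route fails at the step you flagged as delicate: the priority-filling map \(r\) is not monotone, and no case analysis will rescue it. Take \(\nu=\delta_b\le\mu=\delta_a+\delta_b\) in \(\Vext(\Fork)\); on \(\{1\},\{a,1\},\{b,1\},\Fork\) their values are \(0,0,1,1\) and \(0,1,1,2\). For \(\mu\) your formulas give \(m'_1=0\), \(m'_a=1\), \(m'_b=0\), so \(r(\mu)=\delta_a\), while \(r(\nu)=\delta_b\). Since \(\delta_b(\{b,1\})=1>0=\delta_a(\{b,1\})\), we get \(r(\nu)\not\le r(\mu)\).

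The obstruction is structural. A monotone \(r\) fixing \(\Vsub(\Fork)\) must send \(\delta_a+\delta_b\) to an upper bound of \(\delta_a\) and \(\delta_b\) inside \(\Vsub(\Fork)\). The only such upper bound is \(\delta_1\): it must take value \(1\) on \(\{a,1\}\), \(\{b,1\}\) and \(\Fork\), hence on \(\{1\}\) by modularity. So any retraction must push mass up to the point \(1\), and no truncation of the existing masses \(m_1,m_a,m_b\) does that. The retract idea can be repaired with a different map: keep \(\min(1,\mu(W))\) for \(W=\{a,1\},\{b,1\},\Fork\), and assign \(\min(1,\mu(\{a,1\}))+\min(1,\mu(\{b,1\}))-\min(1,\mu(\Fork))\) to \(\{1\}\). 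A short case split on whether \(\mu(\Fork)<1\) shows this is a monotone, indeed Scott-continuous, retraction onto \(\Vsub(\Fork)\). That is not the map you proposed, though, and it needs its own verification.

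Your fallback is the right idea and is essentially the paper's proof. However, you leave its one substantive step open, and that step is immediate if you use \(\delta_1\) itself rather than \(2\delta_1\). Every nonempty Scott-open subset of \(\Fork\) contains \(1\), so \(\delta_1\) takes value \(1\) on all of them. Monotonicity of \(\nu\) then gives \(\nu\le\delta_1\iff\nu(\Fork)\le1\). Hence \(\downarrow\delta_1\) is exactly \(\Vsub(\Fork)\) with the same stochastic order, and Lemma~\ref{lem:principal-ideal-rb} together with Proposition~\ref{prop:vsub-fork-nonrb} finishes the proof.

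The same computation gives \(\downarrow 2\delta_1=\{\nu:\nu(\Fork)\le 2\}\), which \(\nu\mapsto\nu/2\) maps order-isomorphically onto \(\Vsub(\Fork)\). So your \(2\delta_1\) version also works, just less directly. As written, though, the proposal leads with a retraction that is not even monotone, let alone Scott-continuous. The argument should be rebuilt around this principal-ideal identification.
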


\begin{proof}
Let \(1\) denote the top element of \(\Fork\).  The identity on point-mass functions identifies \(\Vsub(\Fork)\) with the principal ideal \(\downarrow\delta_1\) inside \(\Vext(\Fork)\).  Indeed, every nonempty Scott-open upper set of \(\Fork\) contains \(1\), so \(\delta_1(W)=1\) for every nonempty open \(W\).  Hence, for \(\nu\in\Vext(\Fork)\),
\[
        \nu\leq\delta_1
        \quad\Longleftrightarrow\quad
        \nu(\Fork)\leq1,
\]
because \(\Fork\) itself is one of the nonempty Scott-open upper sets and monotonicity gives \(\nu(W)\leq\nu(\Fork)\) for all \(W\).  The condition \(\nu(\Fork)\leq1\) is exactly the subprobability condition, and the order is the same stochastic order in both spaces.  Thus \(\downarrow\delta_1\) and \(\Vsub(\Fork)\) are order-isomorphic via the identity map on valuations.

If \(\Vext(\Fork)\) were RB, Lemma~\ref{lem:principal-ideal-rb} would imply that \(\downarrow\delta_1\) is RB, contradicting Proposition~\ref{prop:vsub-fork-nonrb}.  Therefore \(\Vext(\Fork)\) is not RB.
\end{proof}

\begin{proposition}\label{prop:vext-fork-retract}
Let \(P\) be a nonempty dcpo.  If \(P\) contains a lower fork, then
\[
        \retpair{\Vext(\Fork)}{E}{R}{\Vext(P)}.
\]
Consequently, \(\Vext(P)\) is not an RB-domain.
\end{proposition}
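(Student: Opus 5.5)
The plan is to copy the proof of Proposition~\ref{prop:vsub-fork-retract} almost verbatim, with the subprobability mass bound removed. Fix a lower fork \((x,y,t)\) in \(P\) and choose Scott-open upper sets \(U,V\) as in Lemma~\ref{lem:fork-separation}. The map \(e:\Fork\to P\) with \(e(a)=x\), \(e(b)=y\), \(e(1)=t\) is monotone, hence Scott-continuous on the finite poset \(\Fork\). Put \(E=e_*:\Vext(\Fork)\to\Vext(P)\). By the first part of the proof of Lemma~\ref{lem:valuation-preserves-retracts}, which is stated for \(\W=\Vext\), the map \(E\) is well defined and Scott-continuous.

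Next define \(R:\Vext(P)\to\Vext(\Fork)\) on the five Scott-open upper sets of \(\Fork\) by
\[
R(\nu)(\varnothing)=0,\quad R(\nu)(\{1\})=\nu(U\cap V),\quad R(\nu)(\{a,1\})=\nu(U),\quad R(\nu)(\{b,1\})=\nu(V),\quad R(\nu)(\Fork)=\nu(U\cup V).
\]
The checks proceed as follows.
\begin{enumerate}[label=(\roman*)]
\item Strictness and monotonicity follow from the inclusions among \(U\cap V\), \(U\), \(V\) and \(U\cup V\).
\item Modularity only needs checking on the single incomparable pair \(\{a,1\},\{b,1\}\). There it reduces to \(\nu(U)+\nu(V)=\nu(U\cup V)+\nu(U\cap V)\), which holds in \([0,\infty]\) because \(\nu\) is a valuation.
\item Continuity on directed unions is automatic, since \(\sigma(\Fork)\) is finite.
\end{enumerate}
No mass bound needs checking, because values in \([0,\infty]\) are allowed.

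Scott-continuity of \(R\) holds because directed suprema in \(\Vext(P)\) are computed pointwise on open sets. Each value of \(R(\nu)\) is \(\nu\) evaluated at one fixed open set of \(P\), so \(R(\sup_i\nu_i)=\sup_i R(\nu_i)\).

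To get \(RE=\id\), use the separation relations of Lemma~\ref{lem:fork-separation}. They give \(e^{-1}(U\cap V)=\{1\}\), \(e^{-1}(U)=\{a,1\}\), \(e^{-1}(V)=\{b,1\}\) and \(e^{-1}(U\cup V)=\Fork\). Hence \(R(e_*\eta)(W)=\eta(W)\) for each open \(W\subseteq\Fork\), that is, \(RE=\id_{\Vext(\Fork)}\). Finally, Proposition~\ref{prop:vext-fork-nonrb} says \(\Vext(\Fork)\) is not RB, and Lemma~\ref{lem:rb-retract-closure} then shows \(\Vext(P)\) is not RB.

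The only step needing real care is modularity when values may be infinite. The identity \(\nu(U)+\nu(V)=\nu(U\cup V)+\nu(U\cap V)\) is taken directly from the definition of modularity for extended valuations, so no subtraction is ever used and the argument is unaffected by infinite masses.
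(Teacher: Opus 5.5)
Your proposal is correct and matches the paper's proof: the paper reuses the retraction $R$ from Proposition~\ref{prop:vsub-fork-retract}, drops the total-mass bound, and says the remaining checks carry over word-for-word. You spell out exactly those checks, including modularity in $[0,\infty]$ without subtraction, and then conclude via Proposition~\ref{prop:vext-fork-nonrb} and Lemma~\ref{lem:rb-retract-closure}.
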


\begin{proof}
Let \((x,y,t)\) be a lower fork, and choose Scott-open sets \(U,V\) as in Lemma~\ref{lem:fork-separation}.  Define \(e:\Fork\to P\) by
\[
        e(a)=x,
        \qquad
        e(b)=y,
        \qquad
        e(1)=t,
\]
and let \(E=e_*\).  Define \(R:\Vext(P)\to\Vext(\Fork)\) on the Scott-open upper sets of \(\Fork\) by
\[
\begin{array}{rcl}
R(\nu)(\varnothing)&=&0,\\
R(\nu)(\{1\})&=&\nu(U\cap V),\\
R(\nu)(\{a,1\})&=&\nu(U),\\
R(\nu)(\{b,1\})&=&\nu(V),\\
R(\nu)(\Fork)&=&\nu(U\cup V).
\end{array}
\]
The definition is the same as in Proposition~\ref{prop:vsub-fork-retract}, except that no total-mass bound is required.  The checks of strictness, monotonicity, modularity, valuation-continuity, Scott-continuity of \(R\), and the identity \(R E=\id_{\Vext(\Fork)}\) are word-for-word the same as there.  Since \(\Vext(\Fork)\) is not RB by Proposition~\ref{prop:vext-fork-nonrb}, Lemma~\ref{lem:rb-retract-closure} implies that \(\Vext(P)\) is not RB.
\end{proof}

\begin{theorem}\label{thm:vext-classification}
Let \(P\) be a nonempty dcpo.  Then
\[
        \Vext(P)\text{ is an RB-domain}
        \quad\Longleftrightarrow\quad
        \Vext(P)\text{ is a pointed bc-domain}
        \quad\Longleftrightarrow\quad
        P\text{ contains no lower fork}.
\]
Equivalently, \(\Vext(P)\) is RB iff it is a pointed bc-domain iff every principal ideal of the form \(\downarrow t\) is a chain.
\end{theorem}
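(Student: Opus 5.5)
The proof will run as a cycle of three implications, in parallel with the proofs of Theorem~\ref{thm:vone-classification} and Theorem~\ref{thm:vsub-classification}. The implication ``pointed bc-domain \(\Rightarrow\) RB'' is Lemma~\ref{lem:pointed-bc-rb}. The implication ``no lower fork \(\Rightarrow\) pointed bc-domain'' is the \(\Vext\) part of Corollary~\ref{cor:tree-bc}. That corollary rests on three inputs: \(P\) is continuous by Lemma~\ref{lem:no-fork-continuous}; \(\Vext(P)\) is then continuous by the standard continuity theorem recalled in Section~\ref{sec:preliminaries}; and bounded families have the explicit supremum \(J_p\) built in Proposition~\ref{prop:tree-bounded-complete}, where the argument was already carried out in \(\Vext(P)\) with no mass cutoff. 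Pointedness comes from the zero valuation, which lies below every extended valuation. No further work is needed for this direction.

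The remaining implication is ``RB \(\Rightarrow\) no lower fork''. I will prove its contrapositive directly from Proposition~\ref{prop:vext-fork-retract}. Suppose \(P\) contains a lower fork \((x,y,t)\). That proposition gives a Scott-continuous retraction \(\retpair{\Vext(\Fork)}{E}{R}{\Vext(P)}\), where \(E=e_*\) is the pushforward along the embedding \(a\mapsto x\), \(b\mapsto y\), \(1\mapsto t\). The map \(R\) reads off \(\nu(U\cap V)\), \(\nu(U)\), \(\nu(V)\), \(\nu(U\cup V)\) for the separating opens of Lemma~\ref{lem:fork-separation}. By Proposition~\ref{prop:vext-fork-nonrb}, \(\Vext(\Fork)\) is not RB, since it contains \(\Vsub(\Fork)\cong\Vone(\Dia)\) as the principal ideal \(\downarrow\delta_1\). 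By Lemma~\ref{lem:rb-retract-closure}, \(\Vext(P)\) is then not RB either.

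Combining the three implications gives RB \(\Rightarrow\) no lower fork \(\Rightarrow\) pointed bc-domain \(\Rightarrow\) RB. The reformulation in terms of principal ideals is just the definition of a lower fork.

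The only point needing care is that the verification in Proposition~\ref{prop:vext-fork-retract} really transfers ``word-for-word'' to the extended setting, where values may be \(\infty\). Modularity of \(R(\nu)\) for the pair \(\{a,1\},\{b,1\}\) is the identity \(\nu(U)+\nu(V)=\nu(U\cup V)+\nu(U\cap V)\) in \([0,\infty]\). This holds because \(\nu\) is modular as an extended valuation, and no subtraction is ever used. Scott-continuity of \(R\) holds because directed suprema in \(\Vext(P)\) are pointwise. With these checks the theorem follows.
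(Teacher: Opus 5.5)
Your proposal is correct and follows the paper's proof: no lower fork gives a pointed bc-domain by Corollary~\ref{cor:tree-bc}, pointed bc gives RB by Lemma~\ref{lem:pointed-bc-rb}, and a lower fork rules out RB via the fork retract of Proposition~\ref{prop:vext-fork-retract} together with Proposition~\ref{prop:vext-fork-nonrb} and Lemma~\ref{lem:rb-retract-closure}. Your added check, that modularity of \(R(\nu)\) holds in \([0,\infty]\) without subtraction, is a sensible justification of the paper's ``word-for-word'' transfer to the extended setting.
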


\begin{proof}
If \(\Vext(P)\) is RB, then \(P\) contains no lower fork by Proposition~\ref{prop:vext-fork-retract}.  Conversely, if \(P\) contains no lower fork, then Corollary~\ref{cor:tree-bc} gives that \(\Vext(P)\) is a pointed bc-domain.  Finally, every pointed bc-domain is an RB-domain by Lemma~\ref{lem:pointed-bc-rb}.
\end{proof}

\section{Combined statement}\label{sec:combined}

The three case-by-case classifications give the theorem announced in the introduction.  In particular, for these probabilistic powerdomains over nonempty dcpos, the RB property and the pointed bc-domain property coincide.

\begin{theorem}\label{thm:classification}
Let \(P\) be a nonempty dcpo.  Then:
\[
\begin{aligned}
\Vone(P)\text{ is RB}
&\Longleftrightarrow
\Vone(P)\text{ is a pointed bc-domain}
\Longleftrightarrow
P\text{ has a least element and no lower fork},\\
\Vsub(P)\text{ is RB}
&\Longleftrightarrow
\Vsub(P)\text{ is a pointed bc-domain}
\Longleftrightarrow
P\text{ has no lower fork},\\
\Vext(P)\text{ is RB}
&\Longleftrightarrow
\Vext(P)\text{ is a pointed bc-domain}
\Longleftrightarrow
P\text{ has no lower fork}.
\end{aligned}
\]
Equivalently, ``no lower fork'' means that every principal ideal of the form \(\downarrow t\) is a chain.
\end{theorem}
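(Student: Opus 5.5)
This theorem only collects the three classifications already proved, so the plan is to assemble it line by line from Theorems~\ref{thm:vone-classification}, \ref{thm:vsub-classification}, and~\ref{thm:vext-classification}. Each of these has exactly the form of the corresponding row of the combined statement, for an arbitrary nonempty dcpo \(P\).

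For the first row I will cite Theorem~\ref{thm:vone-classification}. Its two directions are proved as follows. If \(\Vone(P)\) is RB, then \(P\) must have a least element, by the missing-root obstruction of Proposition~\ref{prop:no-root-vone}. Given that least element, a lower fork would make \(\Vone(\Dia)\) a retract of \(\Vone(P)\) by Proposition~\ref{prop:diamond-retract}, and this is excluded by Corollary~\ref{cor:vone-fork-obstruction}. Conversely, if \(P\) is pointed and has no lower fork, Corollary~\ref{cor:tree-bc} makes \(\Vone(P)\) a pointed bc-domain, and Lemma~\ref{lem:pointed-bc-rb} closes the cycle back to RB.

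The second and third rows follow in the same way. Theorem~\ref{thm:vsub-classification} rests on the fork retract of Proposition~\ref{prop:vsub-fork-retract}, and Theorem~\ref{thm:vext-classification} on that of Proposition~\ref{prop:vext-fork-retract}. In both cases the converse again comes from Corollary~\ref{cor:tree-bc} together with Lemma~\ref{lem:pointed-bc-rb}. The final sentence of the statement, that ``no lower fork'' means every principal ideal \(\downarrow t\) is a chain, is just the definition restated: \(\downarrow t\) fails to be a chain exactly when it contains two incomparable elements \(x,y\), and then \((x,y,t)\) is a lower fork.

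I see no real obstacle, because all the substance is in the cited results. The one point to check is that the hypotheses match. Each cited theorem assumes only that \(P\) is a nonempty dcpo, which is the hypothesis here. They also all use the same nonpointed deflation convention for RB-domains, and the same meaning of ``pointed bc-domain'', namely bounded completeness for nonempty bounded families together with a least element.
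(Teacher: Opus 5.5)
Your proposal is correct and follows the paper's proof exactly. The paper also proves the combined theorem by citing Theorems~\ref{thm:vone-classification}, \ref{thm:vsub-classification}, and~\ref{thm:vext-classification} row by row, and notes that \(\downarrow t\) fails to be a chain precisely when it contains two incomparable elements, i.e.\ a lower fork with top \(t\); your account of how those three theorems are obtained (missing-root and diamond/fork retract obstructions one way, Corollary~\ref{cor:tree-bc} with Lemma~\ref{lem:pointed-bc-rb} the other) also matches.
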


\begin{proof}
This is exactly Theorems~\ref{thm:vone-classification}, \ref{thm:vsub-classification}, and \ref{thm:vext-classification}, together with the equivalence between ``no lower fork'' and ``every principal ideal is a chain''.
\end{proof}

\begin{remark}
The finite-poset paper \cite{ChenKouLyuFinite} solves the finite normalized problem and supplies the diamond obstruction \(\Vone(\Dia)\).  Its proof is finite-dimensional, whereas the present paper proves the dcpo analogue.  In the general dcpo setting, the obstruction half is still finite except for the normalized missing-root phenomenon: lower forks in pointed dcpos produce diamond retracts for \(\Vone\), while arbitrary lower forks produce valuation-level fork retracts for \(\Vsub\) and \(\Vext\).  The converse direction is stronger than finite approximation: the absence of lower forks gives explicit least upper bounds for bounded families of valuations, hence pointed bc-domains.  The RB property then follows from Lemma~\ref{lem:pointed-bc-rb}.
\end{remark}

\begin{remark}[Examples]
Every chain, and more generally every dcpo whose principal ideals are chains, satisfies the no-lower-fork condition and is therefore continuous by Lemma~\ref{lem:no-fork-continuous}.  Hence its subprobability and extended valuation dcpos are pointed bc-domains, and its normalized valuation dcpo is a pointed bc-domain exactly when the underlying dcpo has a least element.  A two-point antichain has no lower fork but no least element, so \(\Vsub\) and \(\Vext\) are RB whereas \(\Vone\) is not.  The finite fork \(\Fork\) and the diamond \(\Dia\) exhibit the lower-fork obstruction: \(\Vsub(\Fork)\), \(\Vext(\Fork)\), and \(\Vone(\Dia)\) are not RB-domains.
\end{remark}

\end{document}